\documentclass[amssymb,amstex,amsmath,11pt]{article}
\usepackage[margin=1.4in]{geometry}
\usepackage{amsmath,amssymb,latexsym,amsfonts,url}
\usepackage{graphicx}
\usepackage{amsthm}
\usepackage{tikz}
\usepackage{kotex}
\usepackage{pgfplots}
\usepackage{color}
\usepackage{epstopdf}
\usepackage{hyperref}
\allowdisplaybreaks
\usetikzlibrary{intersections, pgfplots.fillbetween}
\usepackage[all]{xy}
\pgfplotsset{compat=newest}
\pgfdeclarelayer{pre main}

\numberwithin{equation}{section}

\renewcommand{\epsilon}{\varepsilon}
\theoremstyle{plain}
\newtheorem{thm}{Theorem}[section]
\newtheorem*{thm*}{Theorem}
\newtheorem{prop}[thm]{Proposition}
\newtheorem{cor}[thm]{Corollary}
\newtheorem{lem}[thm]{Lemma}

\newtheorem*{lem*}{Lemma}

\newtheorem{rmk}[thm]{Remark}

\newtheorem*{theorem*}{Theorem}
\newtheorem*{proposition*}{Proposition}
\theoremstyle{definition}

\theoremstyle{remark}

 \usepackage{color}
 \usepackage[normalem]{ulem}

\newcommand{\lap}{\Delta}
\newcommand{\Om}{\Omega}
\newcommand{\Grad}{\overline{\nabla}}
\newcommand{\ddr}{\frac{\partial}{\partial r}}
\newcommand{\is}{\int_{\Sigma}}
\newcommand{\Vol}{{\rm Vol}}

\def\div{\textrm{div}}
\def\Ricm{\textrm{Ric}_M}
\def\Ricn{\textrm{Ric}_N}
\def\vol{\textrm{Vol}}

\def\Ric{\textrm{Ric}}

\begin{document}
\begin{title}
{Conformal structures and rigidity of complete stable minimal hypersurfaces}
\end{title}
\begin{author}{Jooyeon Park  \and  Keomkyo Seo}\end{author}

\date{\today}

\maketitle

\begin{abstract}
\noindent
We study complete stable minimal hypersurfaces in Riemannian manifolds under various curvature assumptions. In an $(n+1)$-dimensional complete oriented manifold with nonnegative scalar curvature, we prove that no complete oriented noncompact stable minimal hypersurface can be conformally equivalent to a bounded domain in an $n$-dimensional manifold with nonpositive scalar curvature. As a consequence, there exists no complete stable minimal hypersurface in $\mathbb{R}^{n+1}$ that is conformally equivalent to a bounded domain in $\mathbb{R}^n$. We also show that compact stable minimal hypersurfaces in manifolds with nonnegative scalar curvature have nonnegative smooth Yamabe invariant and we characterize the equality case. Next, we consider complete two-sided stable minimal hypersurfaces in ambient manifolds with pinched sectional curvature. Under an additional curvature condition that makes the second fundamental form a Codazzi tensor, we obtain a rigidity result under an $L^2$-condition on the second fundamental form and derive an upper bound for the first eigenvalue of the Laplacian. Finally, we establish that any complete noncompact two-sided minimal hypersurface immersed in a warped product manifold is stable, provided that the angle function is positive and the second derivative of the warping function is nonnegative.\\

\noindent {\it Mathematics Subject Classification(2020)}: 53C42, 58J05, 53C24. \\
\noindent {\it Key words and phrases}: stable minimal hypersurfaces, Jacobi operator, first eigenvalue, warped product manifold.

\end{abstract}

\section{Introduction}

Stable minimal hypersurfaces form an important subclass of minimal submanifolds and have been widely studied for their geometric and analytic significance.
Recall that a minimal hypersurface $\Sigma$ in a Riemannian manifold $M$ is said to be stable if the second variation of its area is nonnegative under all compactly supported variations of $\Sigma$ in $M$.
More precisely, a minimal hypersurface $\Sigma$ in $M$ is called \textit{stable} if
$$\is |\nabla \varphi|^2 - ( |A|^2 + \Ricm(\eta,\eta) )\varphi^2 dv_g \geq 0 ~~ \text{for all} ~~ \varphi\in C^{\infty}_0(\Sigma),$$
where $\Ricm$ is the Ricci curvature of $M$, $\eta$ is the unit normal vector of $\Sigma$ in $M$, $A$ is the second fundamental form of $\Sigma$, and $dv_g$ is the volume element of $\Sigma$ with respect to the induced metric $g$.

Simple but important examples of stable minimal hypersurfaces are given by minimal graphs over $\mathbb{R}^n$. Bernstein's classical theorem \cite{B27} asserts that every minimal graph over $\mathbb{R}^2$ is planar. This rigidity result was subsequently extended to higher dimensions by Fleming \cite{F62}, De Giorgi \cite{G65}, Almgren \cite{A66}, and Simons \cite{S68}, who proved that every minimal graph over $\mathbb{R}^n$ is an affine plane whenever $n \leq 7$. On the other hand, Bombieri, De Giorgi, and Giusti \cite{BGG69} constructed explicit nonflat minimal graphs for $n \geq 8$, showing that such rigidity no longer holds in higher dimensions.

A closely related phenomenon arises in the stable Bernstein problem, which asks whether every complete oriented stable minimal hypersurface in $\mathbb{R}^{n+1}$ is necessarily a hyperplane. The examples constructed in \cite{BGG69} show that the answer is negative in high dimensions, while positive results are known in low dimensions (see \cite{CMR24, CL24, CLMS, CP79, FC&S80, M}). Under additional hypotheses, do Carmo and Peng \cite{CP80} and Shen and Zhu \cite{SZ98} proved rigidity results in higher dimensions by assuming that the second fundamental form has finite $L^2$-norm and finite $L^n$-norm, respectively.

In this paper, we study the geometry of complete two-sided stable minimal hypersurfaces and derive several related results. The paper is organized as follows. In Section~\ref{sec2}, we review basic preliminaries on the stability of minimal hypersurfaces. In particular, we recall both the characterization in terms of the first eigenvalue and the equivalent condition given by the existence of a positive supersolution. These results will be used throughout the paper.

In Section~\ref{sec3}, we investigate the conformal structure of complete stable minimal hypersurfaces. In their seminal work \cite{FC&S80}, Fischer-Colbrie and Schoen classified complete stable minimal surfaces in complete $3$-dimensional manifolds with nonnegative scalar curvature from the viewpoint of conformal geometry. More precisely, they proved the following:

\begin{thm}[\cite{FC&S80}] \label{FCS3}
    Let $M$ be a complete oriented $3$-dimensional Riemannian manifold of nonnegative scalar curvature and let $\Sigma$ be an oriented complete stable minimal surface in $M$.
    \begin{enumerate}
        \item[{\rm (i)}] If $\Sigma$ is compact, then $\Sigma$ is conformally equivalent to $\mathbb{S}^2$, or $\Sigma$ is a totally geodesic flat torus $\mathbb{T}^2$.
        \item[{\rm (ii)}] If $\Sigma$ is noncompact, then $\Sigma$ is conformally equivalent to the complex plane $\mathbb{C}$ or the cylinder.
    \end{enumerate}
\end{thm}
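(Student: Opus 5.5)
The argument rewrites the stability operator using the Gauss equation and then treats the compact and noncompact cases separately. On a minimal surface $\Sigma$ in a $3$-manifold, the Gauss equation gives
$$|A|^2+\Ricm(\eta,\eta) = \tfrac12 S_M - K + \tfrac12|A|^2,$$
where $S_M$ is the scalar curvature of $M$ and $K$ the Gauss curvature of $\Sigma$. Hence the stability inequality becomes
$$\is |\nabla\varphi|^2 + K\varphi^2 \;\ge\; \is \Big(\tfrac12 S_M+\tfrac12|A|^2\Big)\varphi^2 \;\ge\; 0 .$$
That is, the operator $L_0=-\Delta+K$ is nonnegative on $\Sigma$. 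More precisely, $-\Delta + K - q \ge 0$ with $q=\tfrac12(S_M+|A|^2)\ge 0$.

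For (i), compact $\Sigma$, take $\varphi\equiv 1$. Gauss--Bonnet gives
$$2\pi\chi(\Sigma)=\is K\ge \is q\ge 0 .$$
So $\Sigma$ is a sphere or a torus, and a sphere is conformally $\mathbb S^2$ by uniformization. If $\Sigma$ is a torus, $\chi=0$ forces $q\equiv 0$: thus $A\equiv0$ (totally geodesic) and $S_M\equiv0$ along $\Sigma$. Moreover the constant function $1$ minimizes the quadratic form with value $0$, so the first eigenvalue of $-\Delta+K-q$ is $0$ with constant eigenfunction. This gives $K-q\equiv 0$, so $K\equiv0$ and $\Sigma$ is flat.

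For (ii), noncompact $\Sigma$, I would use the characterization recalled in Section 2: nonnegativity of the operator is equivalent to the existence of a positive function $u$ with $\Delta u - Ku\le 0$ (the inequality is for the operator $-\Delta+K$, since $q\ge0$). The key step is to show $\Sigma$ is conformally $\mathbb C$ or the cylinder. Equivalently, $\Sigma$ must be parabolic with finite topology of genus zero and at most two ends, or at least its universal cover must be conformally $\mathbb C$ rather than the disk. I would first treat the universal cover $\tilde\Sigma$, to which $u$ lifts. Suppose $\tilde\Sigma$ were conformally the unit disk, with metric $ds^2=\lambda^2|dz|^2$. Then $K=-\lambda^{-2}\Delta_0\log\lambda$ and $\Delta=\lambda^{-2}\Delta_0$, so the inequality for $u$ reads
$$\Delta_0 u + (\Delta_0\log\lambda)\,u\le 0 .$$
The plan is to derive a contradiction with completeness via the Fischer-Colbrie--Schoen trick. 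Consider the conformal metric $\tilde g=u^{2}g$; its curvature satisfies $\tilde K\,u^2 = K-\Delta\log u \ge K - \Delta u/u + |\nabla \log u|^2 \ge |\nabla\log u|^2\ge 0$, so $\tilde g$ has nonnegative curvature. One must check that $\tilde g$ is complete, at least along a suitably chosen curve, or use a power $u^{\alpha}$ with carefully chosen $\alpha$. Given that, a complete metric of nonnegative curvature on a simply connected surface is parabolic, hence conformally $\mathbb C$, contradicting the disk.

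The main obstacle is exactly this completeness of the modified metric. I would follow Fischer-Colbrie--Schoen and minimize $\tilde g$-length among curves to infinity. Along a $\tilde g$-minimizing geodesic $\gamma$, use the second variation of length in $\tilde g$ with test functions combining $u$-weights, to show that $\gamma$ must have infinite $g$-length. This contradicts the assumption that $\tilde g$ is incomplete while $g$ is complete.

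Once $\tilde\Sigma\cong\mathbb C$, the deck group acts by conformal automorphisms of $\mathbb C$ without fixed points, namely translations. So $\pi_1(\Sigma)$ is trivial, $\mathbb Z$, or $\mathbb Z^2$. The last case would make $\Sigma$ compact, so $\Sigma$ is conformally $\mathbb C$ or $\mathbb C/\mathbb Z\cong\mathbb C^*$, the cylinder.
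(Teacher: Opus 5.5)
The paper quotes this theorem from \cite{FC&S80} without proof, so your argument has to stand on its own.

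Part (i) is complete and correct. In (ii), two pieces are also fine: you lift the positive supersolution $u$ to $\tilde\Sigma$ (correctly, rather than the quadratic form, which does not lift), and the closing deck-group argument works.

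The gap is the step that carries all the content of (ii): ruling out that $\tilde\Sigma$ is conformally the disk. Your curvature computation for $\tilde g=u^2g$ is right, but you never establish completeness of $\tilde g$, and your sketch runs in the wrong direction. If $\gamma$ is a divergent $\tilde g$-minimizing geodesic of finite $\tilde g$-length, it has infinite $g$-length automatically because $g$ is complete. Proving that therefore yields no contradiction. The second variation would have to deliver the opposite bound $\int_\gamma u^{-1}\,d\tilde s<\infty$, and you do not say which test function along $\gamma$ achieves it. That estimate is the substance of Fischer-Colbrie--Schoen's Theorem~3, so as written (ii) is only asserted. The fallback ``use $u^{\alpha}$'' is not available either: for $u^{2\alpha}g$ one only gets $\tilde K u^{2\alpha}\ge(1-\alpha)K+\alpha|\nabla\log u|^2$, which has a sign only for $\alpha=1$.

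A much shorter way to close the gap avoids auxiliary metrics entirely; it is the two-dimensional analogue of Proposition~\ref{prop1}. Write $g=e^{2w}|dz|^2$ on the unit disk $D$ and set $h=e^{-w}$. Then $h^2\,dA_g=dx$, so $\int_D h^2\,dA_g=\pi$. The Dirichlet integrand is conformally invariant and $K\,dA_g=-\Delta_0 w\,dx$, so an integration by parts gives, for every compactly supported Lipschitz $\varphi$,
$$
\int_D\bigl(|\nabla(h\varphi)|^2+Kh^2\varphi^2\bigr)\,dA_g=\int_D h^2|\nabla\varphi|^2\,dA_g-\int_D\varphi^2|\nabla h|^2\,dA_g .
$$
Take the $g$-cutoff $\varphi$ with $\varphi\equiv1$ on $B_p(R)$, $\operatorname{supp}\varphi\subset B_p(2R)$ and $|\nabla\varphi|\le 1/R$; its support is compact because $g$ is complete. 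The right-hand side is then at most $\pi R^{-2}-\int_{B_p(R)}|\nabla h|^2\,dA_g$. This is negative for large $R$, because $w$ is nonconstant (the flat disk is incomplete). That contradicts the nonnegativity of $-\Delta+K$ on $\tilde\Sigma$, which you obtain from the lifted $u$ via Proposition~\ref{prop0}.

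This is exactly the computation \eqref{2.2}--\eqref{2.4} with $n=2$. For $h=e^{-w}$ one has $\Delta h=|\nabla h|^2/h+Kh$, i.e.\ \eqref{2.2} with equality, and $\tfrac{2}{n}=1$, $\beta_2S=K$.
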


\noindent In particular, the Uniformization Theorem immediately implies that an oriented complete noncompact stable minimal surface $\Sigma$ in a complete oriented $3$-dimensional Riemannian manifold with nonnegative scalar curvature cannot be conformally equivalent to the open unit disk. We extend this observation to higher dimensions. More precisely, we prove the following.

\begin{thm*} [see Theorem \ref{thmnc}]
    Let $M$ be an $(n+1)$-dimensional complete oriented Riemannian manifold of nonnegative scalar curvature and let $(M_0,g_0)$ be an $n$-dimensional noncompact oriented manifold with nonpositive scalar curvature for $n\geq 3$.
    If $\Sigma$ is a complete oriented noncompact stable minimal hypersurface in $M$, then $\Sigma$ cannot be conformally equivalent to any bounded domain $\Om\subset M_0$.
\end{thm*}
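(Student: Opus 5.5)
Suppose, for contradiction, that there is a conformal diffeomorphism $\Phi:(\Sigma,g)\to(\Om,g_0|_\Om)$ with $\Om\subset M_0$ bounded. Write $g=u^{\frac{4}{n-2}}\Phi^*g_0$ with $u>0$ smooth on $\Sigma$, and identify $\Sigma$ with $\Om$. The plan is to play two facts about the conformal Laplacian $L_g=-\frac{4(n-1)}{n-2}\Delta_g+R_g$ against each other. Completeness of $g$ together with boundedness of $\Om$ should force $u$ to blow up at $\partial\Om$, so that $g$ is a complete conformal blow-up of a metric with $R_{g_0}\le 0$. Stability together with $R_M\ge 0$ should force $L_g$ to be nonnegative in a strong sense on $\Sigma$. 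I will show these are incompatible.

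\textbf{Step 1: stability gives a positive conformal Laplacian.} Use the Gauss equation $R_g=R_M-2\Ricm(\eta,\eta)-|A|^2$ (since $H=0$). This gives $|A|^2+\Ricm(\eta,\eta)=\frac12(R_M-R_g+|A|^2)\ge -\frac12R_g$. Substituting into the stability inequality yields
$$\is |\nabla\varphi|^2+\tfrac12 R_g\varphi^2\,dv_g\ \ge 0\quad\text{for all }\varphi\in C_0^\infty(\Sigma).$$
For $n\ge3$ we have $\frac{4(n-1)}{n-2}>2$, so interpolating with the trivial inequality $\int|\nabla\varphi|^2\ge0$ gives, with $c_n=\frac{4(n-1)}{n-2}$,
$$\is c_n|\nabla\varphi|^2+R_g\varphi^2\ge (c_n-2)\is|\nabla\varphi|^2 .$$
Thus $\lambda_1(L_g)\ge0$ on every compact domain. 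By the positive-supersolution characterization recalled in Section~\ref{sec2}, there is then a positive $w$ on $\Sigma$ with $L_g w\ge0$. Moreover, the quadratic form of $L_g$ strictly dominates $(c_n-2)\int|\nabla\varphi|^2$, so $L_g$ is in fact \emph{strictly} positive on compact sets.

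\textbf{Step 2: pull back to $\Om$.} Conformal covariance gives $L_g(\psi)=u^{-\frac{n+2}{n-2}}L_{g_0}(u\psi)$. The positive supersolution $w$ therefore produces $v=uw>0$ on $\Om$ with $L_{g_0}v\ge 0$. Since $R_{g_0}\le0$, this means $-c_n\Delta_{g_0}v\ge -R_{g_0}v\ge 0$, so $v$ is $g_0$-superharmonic on $\Om$. Quadratic forms transform the same way: $\int_\Sigma (c_n|\nabla_g\varphi|^2+R_g\varphi^2)dv_g=\int_\Om (c_n|\nabla_{g_0}(u\varphi)|^2+R_{g_0}(u\varphi)^2)dv_{g_0}$. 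Likewise $\int|\nabla_g\varphi|^2dv_g$ is comparable to a weighted Dirichlet energy of $\psi=u\varphi$ on $\Om$.

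\textbf{Step 3: derive a contradiction using completeness.} This is the step I expect to be the main obstacle. The idea is to exploit that $\Om$ has finite $g_0$-size and that $\overline\Om$ lies in the noncompact manifold $M_0$. Completeness of $g$ means $g$-distance to $\partial\Om$ is infinite, so $u\to\infty$ along $\partial\Om$ in an averaged sense. On the other hand, the $g$-volume of $\Sigma$ relates to $\int_\Om u^{2n/(n-2)}dv_{g_0}$.

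The first thing I would try is a test function argument. Take $\psi$ equal to a fixed positive function near $\overline\Om$, for example a first Dirichlet eigenfunction of a slightly larger domain $\Om'\Supset\Om$ in $M_0$, which exists because $M_0$ is noncompact. Cut $\psi$ off near $\partial\Om$ and set $\varphi=\psi/u$, which has compact support in $\Sigma$. Insert $\varphi$ into the strict inequality of Step 1, rewritten on $\Om$ through Step 2. The right-hand side $(c_n-2)\int|\nabla_g\varphi|^2dv_g$ involves $\nabla(\psi/u)$. The left-hand side is bounded by $c_n\int_{\Om}|\nabla\psi|^2dv_{g_0}$, because $R_{g_0}\le0$. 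Since $u$ blows up at $\partial\Om$, the function $\psi/u$ must vary, so $\int|\nabla_g(\psi/u)|^2$ should be forced to be large. Concretely, use the completeness of $g$ to choose logarithmic cutoffs along $g$-geodesic rays from a fixed point, and compare with the bounded $g_0$-Dirichlet energy of $\psi$.

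If this direct route fails, the fallback is the standard one from the Schoen--Yau conformal-blowup argument. The positive superharmonic function $v$ on the bounded domain $\Om$ can be compared with the positive solution of $\Delta_{g_0}h=0$ on $\Om'$. That comparison bounds $u$ above by $C\,h/w$ near $\partial\Om$, where $w$ is chosen as a minimal positive solution normalized at a point. One then shows this upper bound makes the $g$-length of a curve from a point to $\partial\Om$ finite, contradicting completeness.
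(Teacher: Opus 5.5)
Your Steps 1 and 2 are correct, but Step 3 is the entire content of the theorem, and neither route you sketch closes it.

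\textbf{The fallback cannot work in principle.} It uses only that $L_g=-c_n\Delta_g+R_g$ has a positive supersolution, equivalently that $v=uw$ is $g_0$-superharmonic. The hyperbolic ball has all of these properties, so they cannot lead to a contradiction. Concretely, the metric $\frac{4}{(1-|x|^2)^2}|dx|^2$ on the unit ball is complete and conformal to a bounded flat domain. Yet $w=u^{-1}$ satisfies $L_gw=u^{-\frac{n+2}{n-2}}L_{g_0}(1)=0$, so $v\equiv 1$. In addition, superharmonicity bounds $v$ from below, not from above, so the bound $u\le Ch/w$ does not follow. What rules out the hyperbolic ball is the gain you record in Step 1: the coefficient $\frac12$, rather than $\frac{n-2}{4(n-1)}$, in front of $R_g$. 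Indeed $\lambda_1(\mathbb{H}^n)=\frac{(n-1)^2}{4}<\frac{n(n-1)}{2}$. Any proof must exploit that gain.

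\textbf{The first route keeps the gain, but with $\psi$ fixed independently of $u$ it does not close.} With $\Psi=u\varphi$, the Step 1 inequality is equivalent to
\[
2\int_\Omega|\nabla\Psi|^2\,dv_{g_0}+\int_\Omega R_{g_0}\Psi^2\,dv_{g_0}\ \ge\ (c_n-2)\int_\Omega\Psi^2\,\frac{\Delta_{g_0}u}{u}\,dv_{g_0},
\]
and $\Delta_{g_0}u$ has no sign. So ``$u$ blows up, hence $\psi/u$ must vary'' gives nothing quantitative. Moreover, the left side is not bounded once you cut off.
\begin{itemize}
\item A $g_0$-cutoff at distance $\delta$ from $\partial\Omega$ costs order $\delta^{-1}$, the same order as the gain in the hyperbolic model. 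Deciding the sign would need sharp constants and pointwise information on $u$ that completeness does not give.
\item A $g$-cutoff $\chi$ costs $\int_\Sigma\psi^2u^{-2}|\nabla_g\chi|^2\,dv_g$. This is not controlled by the $g_0$-energy of $\psi$, since $\int_\Sigma u^{-2}dv_g=\int_\Omega u^{\frac{4}{n-2}}dv_{g_0}$ can be infinite, as it is for the hyperbolic ball.
\end{itemize}

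\textbf{The missing idea is to use a power of $u$ as the test function.} Take $\varphi=\chi h$ with $h=u^{-\frac{n}{n-2}}$. Here $\chi$ is a $g$-geodesic cutoff: $\chi\equiv1$ on $B_p(r)$, $\chi\equiv0$ off $B_p(2r)$, $|\nabla\chi|\le 2/r$, and $\chi$ has compact support by completeness of $g$. In your notation this is $\psi=\chi u^{-\frac{2}{n-2}}$, not a fixed function. The conformal change formula and $R_{g_0}\le0$ give
\[
\Delta_gh\ge\frac{2}{n}\frac{|\nabla h|^2}{h}+\frac{n}{4(n-1)}R_gh,
\]
while $\int_\Sigma h^2\,dv_g=\mathrm{Vol}_{g_0}(\Omega)<\infty$. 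Hence
\[
\int_\Sigma\Bigl(|\nabla(\chi h)|^2+\tfrac{n}{4(n-1)}R_g\chi^2h^2\Bigr)dv_g\le\frac{4}{r^2}\mathrm{Vol}_{g_0}(\Omega)-\frac{2}{n}\int_{B_p(r)}|\nabla h|^2\,dv_g<0
\]
for large $r$, because $h$ is nonconstant ($g_0|_\Omega$ is incomplete). Since $\frac12\ge\frac{n}{4(n-1)}$, this contradicts $\int_\Sigma(|\nabla\varphi|^2+\frac12R_g\varphi^2)\,dv_g\ge0$. This is exactly the paper's route, Proposition~\ref{prop1} combined with Corollary~\ref{cor2}.
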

\noindent As a special case, this result implies that there is no complete oriented stable minimal hypersurface in $\mathbb{R}^{n+1}$ conformally equivalent to a bounded domain in $\mathbb{R}^n$ (see Corollary~\ref{cnce}). On the other hand, the Yamabe constant of a conformal class, and the associated smooth Yamabe invariant obtained by taking the supremum over conformal classes, provide useful higher-dimensional scalar curvature invariants. In this spirit, we prove that any compact oriented stable minimal hypersurface $\Sigma$ in a Riemannian manifold with nonnegative scalar curvature has nonnegative smooth Yamabe invariant, and we characterize the zero case (see Theorem~\ref{thmc}).

In Section~\ref{sec4}, we consider complete two-sided stable minimal hypersurfaces whose second fundamental form $A$ satisfies an $L^2$-growth condition. In \cite{CP80}, do Carmo and Peng showed that every complete stable minimal hypersurface in Euclidean space with finite $L^2$-norm of $A$ is totally geodesic. We extend this argument to complete two-sided stable minimal hypersurfaces in complete Riemannian manifolds whose sectional curvatures are pinched, under the additional curvature condition $K_{n+1ijk}=0$ for $1\le i,j,k\le n$ with respect to an adapted local orthonormal frame. By the Codazzi equation this condition makes the second fundamental form a Codazzi tensor, which is precisely what is needed for the refined Kato inequality used below. The condition is automatic in space forms. Let $\Sigma$ be a hypersurface in an $(n+1)$-dimensional Riemannian manifold $M$. Assume that $M$ satisfies
\begin{align*}
K_{n+1ijk}=0 &\quad\text{for all } 1\leq i,j,k\leq n \quad\text{and} \\
K_1\leq K_M\leq K_2 &\quad\text{for some constants~~} K_1 ~\text{and}~ K_2,
\end{align*}
where $K_M$ denotes the sectional curvature $K_M$ of $M$ and that the covariant derivative of the curvature tensor satisfies
$$
|\nabla K|^2\le K_3^2|A|^2
$$
for some constant $K_3\ge 0$. Define the constant $\Gamma_\Sigma$ by
$$
\Gamma_\Sigma
:=
\sup_\Sigma \bigl(-(n^2+5n)K_1+2nK_2+4K_3+|A|^2+S_\Sigma\bigr),
$$
where $S_\Sigma$ denotes the scalar curvature of $\Sigma$. If $\Gamma_\Sigma\le 0$, then we prove that $\Sigma$ is totally geodesic under the above curvature assumptions and the $L^2$-growth condition (see Theorem~\ref{thmftc1}).  Under the same growth assumption, the first eigenvalue $\lambda_1(\Sigma)$ of the Laplacian on a complete stable minimal hypersurface $\Sigma$ in hyperbolic space satisfies
\begin{equation} \label{b1ev}
\frac{(n-1)^2}{4} \leq \lambda_1(\Sigma) \leq n^2.
\end{equation}
The lower bound was proved by Cheung and Leung \cite{C&L01}, while the upper bound was obtained in \cite{Seo11}. Subsequently, Dung and the second author \cite{D&S12} generalized this estimate to the setting of ambient manifolds with pinched negative sectional curvature. If $\Gamma_\Sigma > 0$, we further generalize the upper bound in \eqref{b1ev} to stable minimal hypersurfaces in complete Riemannian manifolds whose sectional curvatures are bounded between two constants, allowing the case of nonnegative sectional curvature (see Theorem~\ref{thmftc2}).

Meanwhile, Aledo and Rubio \cite{A&R16} investigated complete noncompact two-sided minimal surfaces in a class of $3$-dimensional warped product manifolds.
In particular, they proved that any complete noncompact two-sided minimal surface immersed in a $3$-dimensional warped product manifold is stable, provided that the angle function $\nu = g(\partial/\partial r,\eta)$ is positive and the warping function is positive with nonnegative second derivative (see Section~\ref{sec5} for details).
This condition on the angle function is satisfied, for example, when the surface is locally represented as a graph. Motivated by the argument in \cite{A&R16}, Section~\ref{sec5} is devoted to a higher-dimensional extension of this result. More precisely, we prove the following.

\begin{thm*}[see Theorem~\ref{thmw}]
Let $M=(0,\bar{r})\times_h N$ be an $(n+1)$-dimensional  warped product manifold with the metric $g=dr^2+h(r)^2g_N$, where $n\ge 2$ and $h(r)>0$ on $(0,\bar{r})$. Assume that $N$ is an $n$-dimensional Einstein manifold with $\mathrm{Ric}_N = (n-1)k g_N$ for some constant $k$. Let $\Sigma$ be a complete noncompact two-sided minimal hypersurface immersed in $M$ with $\nu\geq0$, where $\nu=g\!\left(\frac{\partial}{\partial r},\eta\right)$ is the angle function. If $h''(r)\geq0$ on $\Sigma$, then either $\nu>0$ everywhere on $\Sigma$ or $\nu\equiv 0$ on $\Sigma$. Moreover, if $\nu>0$ on $\Sigma$, then $\Sigma$ is stable.
\end{thm*}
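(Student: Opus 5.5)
The plan is to show that the angle function $\nu$ is a Jacobi-type supersolution: $L\nu \le 0$, where
\[
L=\Delta+|A|^2+\Ricm(\eta,\eta).
\]
Then the strong maximum principle gives the dichotomy, and the existence of a positive supersolution gives stability, via the characterization recalled in Section~\ref{sec2}.

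\textbf{Step 1: a formula for $L\nu$.} The vector field $X=h(r)\partial_r$ is closed conformal, with $\overline\nabla_Y X = h'(r)Y$. Writing $\Theta = g(X,\eta)=h\nu$, the standard computation for minimal hypersurfaces gives
\[
\Delta \Theta = -\bigl(|A|^2+\Ricm(\eta,\eta)\bigr)\Theta - n\,g(\nabla h',\cdot)\text{-term}.
\]
More precisely, $\Delta\Theta+(|A|^2+\Ricm(\eta,\eta))\Theta = -n\,\eta(h')$, where $\eta(h')=h''(r)\,\nu$. From this I will compute $L\nu = L(\Theta/h)$ using
\[
\Delta(\Theta/h)=\frac{\Delta\Theta}{h}-\frac{2\langle\nabla\Theta,\nabla h\rangle}{h^2}-\frac{\Theta\,\Delta h}{h^2}+\frac{2\Theta|\nabla h|^2}{h^3}.
\]
The ingredients are as follows.
\begin{itemize}
\item $\nabla h = h'\,\partial_r^T$ with $|\partial_r^T|^2=1-\nu^2$.
\item $\Delta_\Sigma r$ is computed from the Hessian of $r$, which is $\frac{h'}{h}(g-dr^2)$ on $M$. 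Hence
\[
\Delta_\Sigma r = \frac{h'}{h}\bigl(n-(1-\nu^2)\bigr)=\frac{h'}{h}(n-1+\nu^2),
\]
using minimality.
\item $\Ricm(\eta,\eta)$ is written via the warped product formulas. With $\Ric_N=(n-1)kg_N$,
\[
\Ricm(\eta,\eta) = -n\frac{h''}{h}\nu^2 + \Bigl((n-1)\frac{k-h'^2}{h^2}-\frac{h''}{h}\Bigr)(1-\nu^2).
\]
\end{itemize}
The Einstein hypothesis is exactly what makes $\Ricm(\eta,\eta)$ depend only on $r$ and $\nu$. I expect a substantial amount of cancellation, ending with an identity of the form
\[
L\nu = -c(n)\,\frac{h''}{h}\,\nu\,(1-\nu^2) + (\text{gradient term in }\nu),
\]
or something similar where all the $k$ and $h'^2$ terms cancel. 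An alternative, which is probably cleaner, is to differentiate $\nu=g(\partial_r,\eta)$ directly. Since $\overline\nabla_Y\partial_r=\frac{h'}{h}(Y-g(Y,\partial_r)\partial_r)$, we get
\[
\nabla\nu = -A(\partial_r^T)+\frac{h'}{h}\bigl(-\nu\,\partial_r^T\bigr),
\]
and then take the divergence, using Codazzi with the ambient curvature term $\overline R(\cdot,\cdot)\eta$ evaluated in the warped product. Either way, the target is
\[
\Delta\nu+(|A|^2+\Ricm(\eta,\eta))\nu = -(n-1)\frac{h''}{h}\,\nu(1-\nu^2)\cdot(\text{positive}) + \frac{(\ldots)}{h}\langle\nabla h,\nabla\nu\rangle,
\]
i.e.\ $L\nu + \langle W,\nabla\nu\rangle \le 0$ whenever $\nu\ge0$ and $h''\ge0$.

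\textbf{Step 2: the dichotomy.} Given $L\nu+\langle W,\nabla\nu\rangle\le0$ and $\nu\ge0$, we have
\[
\Delta\nu+\langle W,\nabla\nu\rangle-c\,\nu\le 0
\]
for a locally bounded $c\ge0$; one can take $c=\max(0,-(|A|^2+\Ricm(\eta,\eta)))$ locally, absorbing the rest since $\nu\ge0$. The strong maximum principle (Hopf) for nonnegative supersolutions then gives either $\nu>0$ everywhere or $\nu\equiv0$ on the connected $\Sigma$.

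\textbf{Step 3: stability.} If $\nu>0$ and the drift term vanishes, i.e.\ $L\nu\le0$, the Fischer-Colbrie--Schoen criterion recalled in Section~\ref{sec2} (a positive supersolution of the Jacobi operator implies stability) finishes the proof. If a drift term $\langle W,\nabla\nu\rangle$ survives, I would instead use a logarithmic test argument. Set $\varphi=\nu\psi$, or write $w=\log\nu$, and integrate $\psi^2$ times the inequality to get $\int|\nabla\psi|^2-(|A|^2+\Ricm(\eta,\eta))\psi^2\ge0$. A drift would spoil this, so the main obstacle is Step 1: getting the exact identity for $L\nu$ so that no first-order term remains and the zero-order term is $-(\text{const})h''\nu(1-\nu^2)/h\le0$. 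I would first carry out the direct computation of $\Delta\nu$ via Codazzi, checking that the terms $\frac{h'}{h}\langle A\partial_r^T,\partial_r^T\rangle$ and the mixed curvature terms cancel as they do in the case $n=2$ of \cite{A&R16}.
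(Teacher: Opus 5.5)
There is a genuine gap, and it is exactly where you place the ``main obstacle'': the drift-free identity for $L\nu$ that you hope for is false, so Step 3 cannot be carried out with $\nu$. If you finish your quotient-rule computation, the $|\nabla h|^2$ terms cancel, and minimality gives $\Delta_\Sigma h=h''(1-\nu^2)+\frac{h'^2}{h}(n-1+\nu^2)$. The result is
\[
L\nu=-\frac{h''}{h}\,\nu\,(n+1-\nu^2)-\frac{h'^2}{h^2}\,\nu\,(n-1+\nu^2)-\frac{2}{h}\langle\nabla h,\nabla\nu\rangle .
\]
So the $h'^2$ terms do not cancel, and the first-order term with $W=2\nabla\log h$ always survives. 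This is not a bookkeeping issue. Take the plane $\{x_3=1\}\subset\mathbb{R}^3$ with pole at the origin. There $\nu=(1+\rho^2)^{-1/2}$ and
\[
L\nu=\Delta\nu=(\rho^2-2)(1+\rho^2)^{-5/2}>0 \quad\text{for } \rho>\sqrt2 .
\]
Hence $\nu$ is not a supersolution of the Jacobi operator, and the Fischer-Colbrie--Schoen criterion cannot be applied to it. Your Step 2 does survive: the inequality $L\nu+\langle W,\nabla\nu\rangle\le 0$ holds, and the strong maximum principle allows a locally bounded drift. Step 3, however, stays open, as you suspected.

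The missing idea is already in the first display of your Step 1. The function $\Theta=g(X,\eta)=h\nu$ satisfies $L\Theta=-n\,\eta(h')=-n h''\nu\le 0$. Since $h>0$, $\Theta\ge 0$ and it has the same zero set as $\nu$. The strong maximum principle applied to $\Theta$ therefore gives the dichotomy. If $\nu>0$, then $\Theta$ is a positive supersolution of $L$, and stability follows from Corollary~\ref{fc&s}. This is precisely the paper's argument (Lemma~\ref{lem4.2} followed by the proof of Theorem~\ref{thmw}). The drift $2\nabla\log h$ in the equation for $\nu$ is exactly what the conjugation $\nu\mapsto h\nu$ removes. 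As a side remark, if you justify $L\Theta=-n\,\eta(h')$ through the general closed-conformal identity $\Ricm(Z,X)=-n\,Z(h')$ rather than the paper's explicit warped-product Ricci formulas, you do not need the Einstein hypothesis on $N$ at all.
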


\noindent As an application, we prove that if $\Sigma$ is a complete noncompact two-sided minimal hypersurface immersed in $\mathbb{R}^{n+1}$ or $\mathbb{H}^{n+1}$ and $p\notin\Sigma$ is fixed, then the corresponding angle function $\nu_p$ satisfies the same dichotomy. Moreover, $\nu_p>0$ implies that $\Sigma$ is stable (see Corollary~\ref{wceh}).

\section{Preliminaries} \label{sec2}

Let $(\Sigma,g)$ be an $n$-dimensional complete noncompact Riemannian manifold with $n\geq 2$, and let $q:\Sigma\to\mathbb{R}$ be a smooth function. Throughout this paper, we denote by $L_q$ the Schr\"odinger operator defined by
$$
L_q := \Delta_g - q,
$$
where $\Delta_g$ is the Laplace operator on $(\Sigma,g)$. For any bounded domain $D\subset\Sigma$, the first eigenvalue of $L_q$ of the operator $L_q$ on $D$ is variationally characterized by
$$\lambda_1^{L_q}(D) = \inf\left\{\int_D|\nabla \phi|^2 + q \phi^2\,dv_g~|~\phi\in C^{\infty}_{0}(D),\,\int_D\phi^2\,dv_g=1\right\}.$$
We need the following useful criterion, due to Fischer-Colbrie and Schoen \cite{FC&S80}, which characterizes the nonnegativity of the first eigenvalue of a Schr\"odinger operator by the existence of a positive supersolution.

\begin{prop}[\cite{FC&S80}]\label{prop0}
The following statements are equivalent:
\begin{enumerate}
    \item[{\rm (i)}] $\lambda_1^{L_q}(D)\geq0$ for every bounded domain $D\subset\Sigma$;
    \item[{\rm (ii)}] there exists a positive solution $f$ that satisfies $L_q f\leq0$ on $\Sigma$.
\end{enumerate}
\end{prop}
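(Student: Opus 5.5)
The implication (ii)$\Rightarrow$(i) follows from a direct integration by parts with the substitution $\phi=f\psi$. Fix a bounded domain $D$ and $\phi\in C^\infty_0(D)$. Since $f>0$ is smooth, we may write $\phi=f\psi$ with $\psi=\phi/f\in C^\infty_0(D)$. Expanding gives $|\nabla\phi|^2=\psi^2|\nabla f|^2+2f\psi\langle\nabla f,\nabla\psi\rangle+f^2|\nabla\psi|^2$. Integrating the middle term by parts yields $\int_D 2f\psi\langle\nabla f,\nabla\psi\rangle = -\int_D \psi^2(|\nabla f|^2+f\Delta f)$, and hence
$$\int_D|\nabla\phi|^2+q\phi^2\,dv_g=\int_D f^2|\nabla\psi|^2-\psi^2 f\,L_qf\,dv_g\ \ge 0,$$
because $L_qf\le 0$ and $f>0$. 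Taking the infimum over $\phi$ gives $\lambda_1^{L_q}(D)\ge0$.

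For (i)$\Rightarrow$(ii) I will exhaust $\Sigma$ and take a normalized limit of first eigenfunctions. Fix a point $x_0$ and an exhaustion by smooth bounded domains $D_1\Subset D_2\Subset\cdots$ with $\bigcup D_k=\Sigma$. First I note that $\lambda_1^{L_q}(D)$ is strictly decreasing under strict inclusion of domains, by the unique continuation property of the first eigenfunction. Since $\lambda_1(D_{k+1})\ge0$ by (i), this gives $\lambda_1(D_k)>0$ for every $k$. Consequently, for each $k$ the Dirichlet problem $L_q u=0$ in $D_k$, $u=1$ on $\partial D_k$, is uniquely solvable: set $u=1+w$ with $L_q w=q$, $w=0$ on $\partial D_k$, which is solvable because the operator is invertible. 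The maximum principle for operators with positive first eigenvalue then gives $u_k>0$. Indeed, if $u_k$ had a negative part, then $u_k^-\in H^1_0(D_k)$ and testing the equation against it would give $\int|\nabla u_k^-|^2+q(u_k^-)^2\le 0$, contradicting $\lambda_1>0$. The Harnack inequality then upgrades this to strict positivity.

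Next I normalize by setting $f_k=u_k/u_k(x_0)$, so that $f_k(x_0)=1$ and $L_qf_k=0$ on $D_k$. On each fixed compact set $K\subset D_m$, the Harnack inequality for positive solutions of $L_q f=0$ gives $\sup_K f_k\le C(K)\inf_K f_k\le C(K)$ for all $k>m$, since $K$ can be taken connected and containing $x_0$. Interior Schauder estimates then bound $f_k$ in $C^{2,\alpha}(K)$. A diagonal Arzel\`a--Ascoli argument extracts a subsequence converging in $C^2_{\mathrm{loc}}$ to some $f\ge0$ with $L_qf=0$ and $f(x_0)=1$. Finally, the strong maximum principle (or Harnack again) applied to the nonnegative solution $f\not\equiv0$ gives $f>0$ everywhere. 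This $f$ satisfies $L_qf=0\le0$, so (ii) holds.

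The main obstacle is the positivity and compactness step. This requires strict positivity of $\lambda_1$ on each $D_k$, which is handled by the strict domain monotonicity argument, together with uniform local bounds independent of $k$, which come from the Harnack inequality after normalizing at $x_0$ and rely on $\Sigma$ being connected.
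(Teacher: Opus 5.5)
Your proposal is correct. For (ii)$\Rightarrow$(i) you use the same substitution $\phi=f\psi$ and integration by parts as the paper. Your identity $\int_D|\nabla\phi|^2+q\phi^2=\int_D f^2|\nabla\psi|^2-\psi^2 fL_qf$ is exactly the paper's computation before it discards the nonnegative term $-\psi^2 fL_qf$.

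For (i)$\Rightarrow$(ii) the paper gives no argument and only cites Theorem~1 of Fischer-Colbrie--Schoen. What you write is essentially their proof:
\begin{itemize}
\item strict positivity of $\lambda_1^{L_q}(D_k)$ from domain monotonicity;
\item solving $L_qu_k=0$ with $u_k=1$ on $\partial D_k$, with positivity from testing against $u_k^-$;
\item normalizing at $x_0$;
\item extracting a limit via Harnack and Schauder estimates.
\end{itemize}
Your version is therefore self-contained where the paper's is not, and it gives the stronger conclusion $L_qf=0$.

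Two points of precision. First, strict decrease of $\lambda_1^{L_q}$ does not follow from mere strict inclusion of domains. For example, removing a point does not change $\lambda_1$ when $n\ge2$. What you actually need is that $D_{k+1}$ is connected and $D_{k+1}\setminus\overline{D_k}$ is a nonempty open set. Your choice $D_k\Subset D_{k+1}$ guarantees this, because $\Sigma$ is connected and noncompact. Second, connectedness of $\Sigma$ is also what lets you apply Harnack on compact connected sets containing $x_0$ and get the conclusion $f>0$ everywhere. The paper leaves this assumption implicit.
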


\begin{proof}
The implication ${\rm (i)}\Rightarrow {\rm (ii)}$ follows directly from Theorem 1 of Fischer-Colbrie and Schoen \cite{FC&S80}. Thus, it remains to prove ${\rm (ii)}\Rightarrow {\rm (i)}$. To see this, assume that there exists a positive function $f$ on $\Sigma$ satisfying $L_q f\leq 0$. Let $D\subset \Sigma$ be any bounded domain, and let $\phi\in C_0^\infty(D)$. Put $\phi=\varphi f$, where $\varphi$ is a compactly supported function on $D$. Then using
    $$\int_{D}\phi\lap \phi \,dv_g = -\int_{D}|\nabla \phi|^2 dv_g,$$
    we have
    $$\int_{D}|\nabla \phi|^2dv_g = -\int_{D}\varphi f\lap(\varphi f)\,dv_g =-\int_{D}(\varphi^2f\lap f + \varphi f^2\lap\varphi+2\varphi f\langle\nabla\varphi,\nabla f\rangle)\,dv_g.$$
   Thus the divergence theorem yields
    \begin{align}\label{i}
       \int_D (|\nabla \phi|^2 + q \phi^2) dv_g& = -\int_{D}[\varphi^2 f (\lap f - q f) + \varphi f^2\lap\varphi+2\varphi f\langle\nabla\varphi,\nabla f\rangle] \,dv_g\nonumber\\
        & = -\int_{D}[\varphi^2 f \, L_q f + \varphi f^2\lap\varphi+2\varphi f\langle\nabla\varphi,\nabla f\rangle]\,dv_g \nonumber\\
        & \geq -\int_{D}[ \varphi f^2\lap\varphi + \frac{1}{2}\langle\nabla(\varphi^2),\nabla (f^2)\rangle]\,dv_g \nonumber\\
        & =\int_{D} f^2|\nabla\varphi|^2 dv_g \geq 0,
    \end{align}
    which completes the proof.
\end{proof}

\noindent Let
$$
q=-|A|^2-\Ric_M(\eta,\eta)
$$
on $\Sigma$, where $A$ denotes the second fundamental form of $\Sigma$, $\Ric_M$ the Ricci curvature of $M$, and $\eta$ a unit normal vector field along $\Sigma$. Then
$$
L_q=L,
$$
where
$$
L=\Delta+|A|^2+\Ric_M(\eta,\eta)
$$
is the Jacobi operator. Thus, Proposition~\ref{prop0} immediately yields the following corollary.

\begin{cor}[\cite{FC&S80}]\label{fc&s}
Let $\Sigma$ be an $n$-dimensional complete noncompact minimal hypersurface immersed in an $(n+1)$-dimensional Riemannian manifold $M$, where $n\geq 2$. Then $\Sigma$ is stable if and only if there exists a positive smooth function $f$ on $\Sigma$ such that
$$
Lf \leq 0,
$$
where $L = \lap + |A|^2 + \mathrm{Ric}_M(\eta,\eta)$ is the Jacobi operator.
\end{cor}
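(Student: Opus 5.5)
The plan is to apply Proposition~\ref{prop0} with the potential $q=-|A|^2-\Ricm(\eta,\eta)$. Everything reduces to identifying the quadratic form of $L_q$ with the second variation of area, and then matching the two statements.

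First I will record the identity $L_q=\Delta_g-q=\Delta+|A|^2+\Ricm(\eta,\eta)=L$. With this choice of $q$, the form appearing in $\lambda_1^{L_q}(D)$ is
$$\int_D |\nabla\phi|^2-\bigl(|A|^2+\Ricm(\eta,\eta)\bigr)\phi^2\,dv_g .$$
This is exactly the stability integrand from the introduction.

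Next I will compare stability with condition (i) of Proposition~\ref{prop0}.

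Suppose $\Sigma$ is stable, and let $D\subset\Sigma$ be a bounded domain. Every $\phi\in C_0^\infty(D)$ extends by zero to an element of $C_0^\infty(\Sigma)$. The stability inequality then gives the quadratic form above $\ge0$ for every such $\phi$, so $\lambda_1^{L_q}(D)\ge0$.

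Conversely, suppose $\lambda_1^{L_q}(D)\ge0$ for every bounded domain $D$. Any $\varphi\in C_0^\infty(\Sigma)$ has compact support, which lies in some bounded domain $D$; for instance, take a large geodesic ball around a point, which is bounded since $\Sigma$ is complete. Normalizing gives the stability inequality for $\varphi$ (it is trivial when $\varphi\equiv0$). Hence stability is equivalent to (i).

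Finally, Proposition~\ref{prop0} says that (i) is equivalent to (ii): the existence of a positive $f$ with $L_qf=Lf\le0$. That is the claim.

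The only point needing care is smoothness. Theorem~1 of \cite{FC&S80} produces a positive solution of $Lf=0$, which is smooth by elliptic regularity because the coefficients $|A|^2$ and $\Ricm(\eta,\eta)$ are smooth on $\Sigma$. So the $f$ obtained is a positive smooth function, as the statement requires. I do not expect any genuine obstacle beyond this bookkeeping.
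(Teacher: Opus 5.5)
Your proposal is correct and follows the paper's route: the paper sets $q=-|A|^2-\mathrm{Ric}_M(\eta,\eta)$, observes that $L_q=L$, and invokes Proposition~\ref{prop0}. Your extra bookkeeping fills in details the paper leaves implicit. This includes matching stability with condition (i) via extension by zero and exhaustion by geodesic balls, which are relatively compact by Hopf--Rinow since $\Sigma$ is complete. It also includes getting smoothness of the Fischer-Colbrie--Schoen solution from elliptic regularity.
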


\section{Conformal structures of complete stable minimal hypersurfaces in complete manifolds with nonnegative scalar curvature} \label{sec3}

Let $(\Omega,g_0)$ be an $n(\ge 3)$-dimensional Riemannian manifold. Suppose that $g$ is a complete metric on $\Omega$ conformal to $g_0$. Then there exists a positive smooth function $u$ on $\Omega$ such that
$$
g=u^{\frac{4}{n-2}}g_0.
$$
It is well known that the scalar curvatures $S_0$ and $S$ of $g_0$ and $g$, respectively, are related by
\begin{equation}\label{ye}
-\frac{4(n-1)}{n-2}\,\Delta_g(u^{-1})+S\,u^{-1}
=
S_0\,u^{-\frac{n+2}{n-2}},
\end{equation}
where $\Delta_g$ denotes the Laplace--Beltrami operator with respect to the metric $g$. On the other hand, let $M$ be the unit disk in the complex plane $\mathbb{C}$ equipped with the metric $ds^2=\lambda(z)\,|dz|^2$. In \cite{FC&S80}, Fischer-Colbrie and Schoen proved that if $ds^2$ is complete on $M$, then for every constant $a\ge 1$ there exists no positive solution $f$ of
$$
\Delta f-aKf=0
$$
on $M$, where $\Delta$ and $K$ denote the Laplacian and the Gaussian curvature of $M$, respectively. Later, Kato and Nayatani \cite{KN93} extended this result to higher-dimensional compact Riemannian manifolds with nonpositive scalar curvature. We show that the same conclusion still holds in the noncompact setting as follows. Throughout this section, by a bounded domain we mean a relatively compact domain.

\begin{prop} \label{prop1}
Let $(M_0,g_0)$ be an $n$-dimensional noncompact Riemannian manifold with $n\ge 3$ and nonpositive scalar curvature $S_0$ and let $\Omega$ be a bounded domain in $M_0$. Assume that $g$ is a complete metric on $\Omega$ conformal to $g_0$. Denote by $S$ the scalar curvature of the metric $g$. Then, for every constant $\beta \ge \frac{n}{4(n-1)},$
there is no positive function $f$ on $\Omega$ such that
$$
L_{\beta S}f \le 0
$$
on $\Omega$.
\end{prop}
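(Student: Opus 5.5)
The plan is to argue by contradiction, combining the variational inequality from Proposition~\ref{prop0} with the conformal transformation law \eqref{ye}, tested against a power of the conformal factor.

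\textbf{Step 1: setup.} Suppose $f>0$ satisfies $L_{\beta S}f\le 0$ on $(\Omega,g)$. Applying Proposition~\ref{prop0} (ii)$\Rightarrow$(i) on $(\Omega,g)$ gives
$$\int_\Omega |\nabla\phi|^2+\beta S\phi^2\,dv_g\ \ge 0$$
for every compactly supported Lipschitz $\phi$. Write $g=u^{\frac{4}{n-2}}g_0$ and set $w=u^{-1}>0$. Since $S_0\le 0$, \eqref{ye} gives
$$\Delta_g w\ \ge\ c\,S\,w,\qquad c=\tfrac{n-2}{4(n-1)}.$$

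\textbf{Step 2: two integral inequalities.} Take $\phi=w^{\gamma}\eta$, where $\eta$ is a cutoff with respect to $g$ and $\gamma>0$ is to be chosen. Multiply the differential inequality for $w$ by $w^{2\gamma-1}\eta^2$ and integrate by parts. Since $\beta>0$, this gives the upper bound
$$\beta\int S w^{2\gamma}\eta^2 \le -\tfrac{\beta}{c}\Big[(2\gamma-1)\int w^{2\gamma-2}\eta^2|\nabla w|^2+2\int w^{2\gamma-1}\eta\langle\nabla\eta,\nabla w\rangle\Big].$$
Insert this into the stability inequality for $\phi=w^\gamma\eta$. With $a=\beta/c$, the coefficient of $\int w^{2\gamma-2}\eta^2|\nabla w|^2$ becomes $\gamma^2-a(2\gamma-1)$. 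The hypothesis $\beta\ge\frac{n}{4(n-1)}$ means exactly $a\ge \frac{n}{n-2}>1$. Hence $\gamma^2-2a\gamma+a<0$ on the interval $(a-\sqrt{a^2-a},\,a+\sqrt{a^2-a})$, and this interval contains $\gamma=\frac{n}{n-2}$. Fix $\gamma=\frac{n}{n-2}$.

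\textbf{Step 3: absorb the cross terms.} The cross terms are absorbed by Young's inequality. This yields, for some $\delta>0$ and $C$ depending only on $n$ and $\beta$,
$$\delta\int w^{2\gamma-2}\eta^2|\nabla w|_g^2\,dv_g\ \le\ C\int w^{2\gamma}|\nabla\eta|_g^2\,dv_g .$$

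\textbf{Step 4: the cutoff term, which is the key point.} Take $\eta$ equal to $1$ on $B^g_R(x_0)$, supported in $B^g_{2R}(x_0)$, with $|\nabla\eta|_g\le 1/R$. These are compactly supported because $g$ is complete. Since $dv_g=u^{\frac{2n}{n-2}}dv_{g_0}=w^{-2\gamma}dv_{g_0}$ for our choice $\gamma=\frac{n}{n-2}$, we get
$$\int w^{2\gamma}|\nabla\eta|_g^2\,dv_g\le R^{-2}\,\mathrm{Vol}_{g_0}(\Omega)<\infty,$$
because $\Omega$ is relatively compact in $M_0$. Letting $R\to\infty$ gives $\nabla w\equiv0$, so $u$ is constant. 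This choice of $\gamma$ is what removes any need for boundary behaviour of $u$; the only thing to check is that $\frac{n}{n-2}$ really lies in the admissible $\gamma$-range, which the computation in Step 2 confirms.

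\textbf{Step 5: contradiction.} If $u$ is constant, then $g$ is a constant multiple of $g_0$ on $\Omega$. But $\Omega$ is a bounded domain in the noncompact manifold $M_0$, so $\Omega\ne M_0$ and $\Omega$ has nonempty boundary in $M_0$. A $g_0$-geodesic leaving $\Omega$ has finite length, so $(\Omega, \mathrm{const}\cdot g_0)$ is incomplete. This contradicts the completeness of $g$, so no such $f$ exists.
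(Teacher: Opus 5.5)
Your proposal is correct and takes essentially the same route as the paper. Your test function $w^{n/(n-2)}\eta$ is exactly the paper's $h\varphi$ with $h=u^{-n/(n-2)}$, and the decisive points coincide: $h^2\,dv_g=dv_{g_0}$ bounds the cutoff error by $R^{-2}\mathrm{Vol}_{g_0}(\Omega)$, and completeness of $g$ on a bounded domain forces $u$ to be nonconstant. The only difference is that you handle $\beta>\frac{n}{4(n-1)}$ directly, absorbing the cross term $2(\gamma-a)\int w^{2\gamma-1}\eta\langle\nabla w,\nabla\eta\rangle$ with Young's inequality, whereas the paper first reduces to $\beta=\beta_n=\frac{n}{4(n-1)}$ via $\lambda_1^{L_{\beta_n S}}(D)\ge\frac{\beta_n}{\beta}\lambda_1^{L_{\beta S}}(D)$, where $\gamma=a$ and the cross term cancels exactly.
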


\begin{proof}
Define
$$\beta_n=\frac{n}{4(n-1)}.$$
Consider any bounded domain $D$ in $\Om$ with the complete metric $g$. Then
$$L_{\beta_n S} = \Delta_g - \beta_n S.$$
For $\beta\geq\beta_n$, observe that
\begin{align*}
\int_{D} (|\nabla \phi|^2+\beta_n S \phi^2)\, dv_g & \geq \frac{\beta_n}{\beta}\int_{D} (|\nabla \phi|^2+\beta S \phi^2)\, dv_g\\
& \geq \frac{\beta_n}{\beta}\lambda_1^{L_{\beta S}}(D)
\end{align*}
for any smooth function $\phi$ with compact support in $D$ satisfying $\int_D\phi^2\,dv_g=1$. This gives
\begin{align}
\lambda_1^{L_{\beta_n S}}(D) \geq \frac{\beta_n}{\beta}\lambda_1^{L_{\beta S}}(D) \label{ineq: Proposition 3.1}
\end{align}
for $\beta\geq\beta_n$. We claim that
$$
\lambda_1^{L_{\beta_n S}}(D)<0
$$
for some bounded domain $D\subset (\Omega,g)$. Once this is established, it follows from \eqref{ineq: Proposition 3.1} and Proposition~\ref{prop0} that there exists no positive solution of
$$
L_{\beta S}f\le 0
$$
for any $\beta\ge \beta_n$. To prove the claim, by the conformal relation between $g$ and $g_0$, we write the metric $g$ in the form
$$
g=u^{\frac{4}{n-2}}g_0
$$
for some positive smooth function $u$ on $\Omega$. Since $S_0$ is nonpositive, it follows from \eqref{ye} that
$$
-\frac{4(n-1)}{n-2}\,\Delta_g(u^{-1})+Su^{-1}
=
S_0u^{-\frac{n+2}{n-2}}
\le 0.
$$
Therefore we have
$$
\Delta_g(u^{-1})\ge \frac{n-2}{4(n-1)}\,Su^{-1}.
$$
Define
$$
h=u^{-\frac{n}{n-2}}.
$$
Then a straightforward computation shows that
\begin{equation}\label{2.2}
\Delta_g h\ge \frac{2}{n}\frac{|\nabla h|^2}{h}+\beta_n Sh.
\end{equation}
    For a smooth function $\varphi$ on $\Om$ with compact support in $D$, we have
      \begin{align*}
        \lambda_1^{L_{\beta_nS}}(D)\int_{D} (\varphi h)^2 dv_g
        & \leq\int_{D} (-\varphi h \lap_g(\varphi h)+\beta_n S (\varphi h)^2)\, dv_g\\
        & = \int_{D} (-\varphi h^2 \lap_g \varphi -2\varphi h \,g(\nabla \varphi, \nabla h) -\varphi^2 h \lap_g h +\beta_n S (\varphi h)^2)\, dv_g \\
        & \leq \int_{D} \left(-\varphi h^2 \lap_g \varphi -\frac{1}{2} g\left(\nabla (\varphi^2), \nabla (h^2)\right) - \frac{2}{n}\varphi^2|\nabla h|^2\right) dv_g
    \end{align*}
    by the definition of $\lambda_1^{L_{\beta_n S}}(D)$ and (\ref{2.2}).
    Integration by parts gives
    \begin{equation}\label{2.3}
        \lambda_1^{L_{\beta_nS}}(D)\int_{D} (\varphi h)^2 dv_g \leq \int_{D} \left( h^2|\nabla \varphi|^2 -\frac{2}{n}\varphi^2|\nabla h|^2\right) dv_g.
    \end{equation}
    Choose a Lipschitz function $\varphi$ with a compact support satisfying
    $$0\leq \varphi\leq 1,\quad \varphi\equiv 1~ \text{on}~ B(R),\quad \varphi\equiv 0~\text{on}~\Om \backslash B(2R), \quad \text{and} \quad|\nabla \varphi|\leq \frac{2}{R},$$
    where $B_p(R)$ is a $g$-geodesic ball  of radius $R$ centered at a fixed point $p$.
    Putting this into (\ref{2.3}), we obtain
      \begin{equation}\label{2.4}
        \lambda_1^{L_{\beta_nS}}(B_p(2R))\int_{B_p(2R)} \varphi^2 h^2 dv_g \leq \frac{4}{R^2}\int_{B_p(2R)} h^2 dv_g -\frac{2}{n} \int_{B_p(R)}|\nabla h|^2dv_g.
    \end{equation}
Recall that $dv_g$ and $dv_{g_0}$ denote the volume elements of $g$ and $g_0$, respectively. Since
$$
\operatorname{vol}_{g_0}(\Omega)
=
\int_\Omega dv_{g_0}
=
\int_\Omega u^{-\frac{2n}{n-2}}\,dv_g
=
\int_\Omega h^2\,dv_g
\ge
\int_{B_p(2R)} h^2\,dv_g,
$$
and $\Omega$ is relatively compact in $(M_0,g_0)$ and hence has finite volume, the first term on the right-hand side of \eqref{2.4} tends to zero as $R\to\infty$. On the other hand, since $g$ is complete on the bounded domain $\Omega$, the conformal factor $u$ cannot be constant. Hence $h$ is also nonconstant, which shows that $|\nabla h|$ does not vanish identically. Thus, for sufficiently large $R$, the second term on the right-hand side of \eqref{2.4} is strictly negative. Therefore we conclude that
$$
\lambda_1^{L_{\beta_n S}}(B_p(2R))<0,
$$
which completes the proof in view of Proposition~\ref{prop0}.
\end{proof}

\noindent Combining Propositions~\ref{prop0} and \ref{prop1}, we obtain the following additional nonexistence result.

\begin{cor}\label{cor2}
Let $(M_0,g_0)$ be an $n(\ge 3)$-dimensional noncompact Riemannian manifold with nonpositive scalar curvature and let $\Omega\subset M_0$ be a bounded domain. Assume that $g$ is a complete metric on $\Omega$ conformal to $g_0$.  Let $P$ be a nonnegative function on $\Omega$. Then, for every
$\beta\ge \frac{n}{4(n-1)},$
there exists no positive solution $f$ on $\Omega$ of
$$
\Delta_g f-\beta S f+Pf\le 0,
$$
where $S$ is the scalar curvature of $(\Omega,g)$ and $\Delta_g$ denotes the Laplace-Beltrami operator with respect to $g$.
\end{cor}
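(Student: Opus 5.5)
The plan is to argue by contradiction and reduce Corollary~\ref{cor2} directly to Proposition~\ref{prop1}. Suppose that for some $\beta\ge \frac{n}{4(n-1)}$ there were a positive function $f$ on $\Omega$ with
$$
\Delta_g f-\beta S f+Pf\le 0 .
$$
Since $P\ge 0$ and $f>0$, we have $Pf\ge 0$, and therefore
$$
L_{\beta S}f=\Delta_g f-\beta S f\le -Pf\le 0 \quad\text{on } \Omega .
$$
Thus $f$ would be a positive supersolution of $L_{\beta S}$ on the complete manifold $(\Omega,g)$. This is exactly the situation Proposition~\ref{prop1} excludes, given that $(M_0,g_0)$ is noncompact with nonpositive scalar curvature, $\Omega$ is bounded, and $g$ is a complete metric conformal to $g_0$. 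This contradiction proves the corollary.

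The same argument can be phrased through eigenvalues, in the spirit of combining Propositions~\ref{prop0} and \ref{prop1}. If such an $f$ existed, Proposition~\ref{prop0}, applied with $q=\beta S-P$, would give
$$
\lambda_1^{L_{\beta S-P}}(D)\ge 0 \quad\text{for every bounded } D .
$$
Since $P\ge0$, for any test function $\phi$ we have
$$
\int_D\big(|\nabla\phi|^2+\beta S\phi^2\big)\,dv_g\ \ge\ \int_D\big(|\nabla\phi|^2+(\beta S-P)\phi^2\big)\,dv_g ,
$$
so $\lambda_1^{L_{\beta S}}(D)\ge 0$ for every bounded $D$. On the other hand, the proof of Proposition~\ref{prop1} produced a ball with
$$
\lambda_1^{L_{\beta_n S}}\big(B_p(2R)\big)<0 .
$$
By inequality \eqref{ineq: Proposition 3.1}, this forces $\lambda_1^{L_{\beta S}}(B_p(2R))<0$, which is again a contradiction.

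There is no real obstacle here. The only point that needs care is the sign bookkeeping: because $P$ is nonnegative, adding $Pf$ only makes the supersolution condition harder to satisfy, so any $f$ satisfying the corollary's inequality automatically satisfies $L_{\beta S}f\le 0$.
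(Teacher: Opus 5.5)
Your proposal is correct, and your first argument takes a shorter route than the paper's.

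The paper proceeds in four steps:
\begin{itemize}
\item it sets $q=\beta S-P$ and uses the easy direction (ii)$\Rightarrow$(i) of Proposition~\ref{prop0} to get $\lambda_1^{L_q}(D)\ge 0$;
\item it discards $P\ge 0$ to obtain $\lambda_1^{L_{\beta S}}(D)\ge 0$ for every bounded $D$;
\item it invokes the nontrivial direction (i)$\Rightarrow$(ii), the Fischer-Colbrie--Schoen existence theorem, to produce a positive supersolution of $L_{\beta S}$;
\item this supersolution contradicts Proposition~\ref{prop1}.
\end{itemize}

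You instead observe pointwise that $\Delta_g f-\beta Sf\le -Pf\le 0$, so the given $f$ is already such a supersolution. This needs only the statement of Proposition~\ref{prop1} and no existence theory.

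Your second, eigenvalue-based version is essentially the paper's argument, with one change at the end. You close it by combining the negative eigenvalue on $B_p(2R)$ from the proof of Proposition~\ref{prop1} with \eqref{ineq: Proposition 3.1}, rather than returning to a supersolution via Proposition~\ref{prop0}. This also sidesteps the existence direction.

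The paper's round trip through eigenvalues gains nothing here, so your pointwise comparison is the cleaner proof.
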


\begin{proof}
Suppose, for contradiction, that there exists a positive solution $f$ on $\Omega$ satisfying
$$
\Delta_g f-\beta S f+Pf\le 0.
$$
Set
$$
q=\beta S-P,
$$
and let $D$ be any bounded domain in $(\Omega,g)$. Then Proposition~\ref{prop0} implies that
$$
\lambda_1^{L_q}(D)\ge 0.
$$
Hence, for every smooth function $\varphi$ with compact support in $D$ satisfying $\int_D \varphi^2\,dv_g=1,$
we have
$$
\int_D \bigl(|\nabla \varphi|^2+q\varphi^2\bigr)\,dv_g\ge 0.
$$
Since $q=\beta S-P$, it follows that
$$
\int_D \bigl(|\nabla \varphi|^2+\beta S\varphi^2\bigr)\,dv_g
\ge
\int_D P\varphi^2\,dv_g
\ge 0.
$$
Therefore we get
$$
\lambda_1^{L_{\beta S}}(D)\ge 0.
$$
Because $D$ was arbitrary, Proposition~\ref{prop0} applied again to the operator $L_{\beta S}$ yields the existence of a positive solution of
$$
L_{\beta S}f\le 0
$$
on $\Omega$. This contradicts Proposition~\ref{prop1}, which completes the proof.
\end{proof}

We are now in a position to prove Theorem~\ref{thmnc}.

\begin{thm}\label{thmnc}
Let $M$ be an $(n+1)$-dimensional complete oriented Riemannian manifold with nonnegative scalar curvature $S_M$ and let $(M_0,g_0)$ be an $n$-dimensional noncompact oriented Riemannian manifold with nonpositive scalar curvature for $n\ge 3$. Let $\Sigma$ be a complete oriented noncompact stable minimal hypersurface in $M$. Then $\Sigma$ cannot be conformally equivalent to any bounded domain $\Omega\subset M_0$.
\end{thm}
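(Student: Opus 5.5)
We argue by contradiction. Suppose $\Sigma$ is conformally equivalent to a bounded domain $\Omega\subset M_0$. We transport the induced metric $g$ of $\Sigma$ to $\Omega$, where it becomes a complete metric conformal to $g_0$. The strategy is to produce, from stability, a positive function $f$ on $\Sigma$ with
$$\Delta_g f-\beta S f+Pf\le 0$$
for some $\beta\ge \frac{n}{4(n-1)}$ and some $P\ge 0$. Corollary~\ref{cor2} forbids such an $f$.

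First, by Corollary~\ref{fc&s}, stability yields a positive smooth $f$ on $\Sigma$ with $\Delta f+(|A|^2+\Ric_M(\eta,\eta))f\le 0$. Next we rewrite the potential using the Gauss equation in traced form. For a minimal hypersurface,
$$S_M = S + 2\Ric_M(\eta,\eta) + |A|^2,$$
so
$$|A|^2+\Ric_M(\eta,\eta)=\tfrac12\bigl(S_M-S+|A|^2\bigr).$$
The stability inequality then becomes
$$\Delta f-\tfrac12 S f+\tfrac12\bigl(S_M+|A|^2\bigr)f\le 0 .$$
We set $\beta=\tfrac12$ and $P=\tfrac12(S_M+|A|^2)$. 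Then $P\ge 0$, because $S_M\ge 0$ by hypothesis.

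It remains to check $\tfrac12\ge \frac{n}{4(n-1)}$. This is equivalent to $2(n-1)\ge n$, i.e.\ $n\ge 2$, which holds since $n\ge 3$. All hypotheses of Corollary~\ref{cor2} are then met: $M_0$ is noncompact with nonpositive scalar curvature, $\Omega$ is bounded, and $g$ is complete and conformal to $g_0$. Hence no such positive $f$ exists, a contradiction.

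The only substantive step is getting the traced Gauss equation right, with the correct coefficient of $\Ric_M(\eta,\eta)$ and the vanishing of $H^2$. The rest is bookkeeping of the conformal identification. The identification is a diffeomorphism $\Phi:\Sigma\to\Omega$ with $(\Phi^{-1})^*g=u^{4/(n-2)}g_0$, and completeness passes to $\Omega$ because $\Phi$ is an isometry onto $(\Omega,(\Phi^{-1})^*g)$. Orientability plays no role beyond guaranteeing two-sidedness, which gives a global unit normal $\eta$ for the stability operator.
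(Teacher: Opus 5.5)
Your proposal is correct and follows the paper's proof essentially verbatim. The traced Gauss equation rewrites the Jacobi operator as $\Delta_g - \tfrac12 S + \tfrac12(S_M+|A|^2)$, and stability then produces a positive supersolution; you obtain it via Corollary~\ref{fc&s}, while the paper applies Proposition~\ref{prop0} to the rewritten quadratic form, which amounts to the same thing. Corollary~\ref{cor2} with $\beta=\tfrac12$ and $P=\tfrac12(S_M+|A|^2)\ge 0$ then gives the contradiction.
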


\begin{proof}
    Suppose, for contradiction, that $\Sigma$ is conformally equivalent to some bounded domain $\Omega\subset M_0$. Then $\Sigma$ may be identified with the domain $\Omega$ endowed with a complete metric
    $$g=u^{\frac{4}{n-2}}g_0,$$
    where $u$ is a positive smooth function on $\Omega$. Denote by $S$ the scalar curvature of $(\Omega,g)$. By the Gauss equation for a minimal hypersurface, we have
    \begin{equation*}
        \Ric_M(\eta,\eta)=\frac{S_M-S-|A|^2}{2},
    \end{equation*}
    where $\Ric_M$ is the Ricci curvature of $M$, $\eta$ is a unit normal vector field along $\Sigma$, and $A$ is the second fundamental form of $\Sigma$ in $M$. Therefore, the stability inequality gives
    \begin{align*}
        0
        &\le \int_\Omega \left(|\nabla \varphi|^2-\bigl(|A|^2+\Ric_M(\eta,\eta)\bigr)\varphi^2\right)\,dv_g \\
        &= \int_\Omega \left(|\nabla \varphi|^2+\left(\frac{S}{2}-\frac{S_M+|A|^2}{2}\right)\varphi^2\right)\,dv_g
    \end{align*}
    for every smooth function $\varphi$ with compact support in $\Omega$. It follows from Proposition~\ref{prop0} that there exists a positive solution $f$ on $\Omega$ satisfying
    $$\Delta_g f-\frac{S}{2}f+\frac{S_M+|A|^2}{2}f\le 0.$$
    Now observe that
    $$\frac{1}{2}\ge \frac{n}{4(n-1)}
    \qquad \text{and} \qquad
    \frac{S_M+|A|^2}{2}\ge 0.$$
    Hence Corollary~\ref{cor2} applies with
    $$\beta=\frac{1}{2} \qquad \text{and} \qquad P=\frac{S_M+|A|^2}{2},$$
    which is a contradiction. This completes the proof.
\end{proof}

\noindent As an immediate consequence of Theorem~\ref{thmnc}, we obtain the following corollary by taking $M=\mathbb{R}^{n+1}$ and $M_0=\mathbb{R}^n$, both equipped with their standard flat metrics.

\begin{cor}\label{cnce}
For $n\ge 3$, no complete oriented stable minimal hypersurface in $\mathbb{R}^{n+1}$ is conformally equivalent to any bounded domain in $\mathbb{R}^n$.
\end{cor}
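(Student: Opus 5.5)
The plan is to deduce Corollary~\ref{cnce} directly from Theorem~\ref{thmnc} by checking that $M=\mathbb{R}^{n+1}$ and $M_0=\mathbb{R}^n$ with their flat metrics satisfy all the hypotheses, and then to treat separately the one case that the theorem does not literally cover, namely a compact hypersurface.

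\textbf{Checking the hypotheses.} First, $\mathbb{R}^{n+1}$ is complete and oriented, and its scalar curvature $S_M\equiv 0$ is nonnegative. Second, $(\mathbb{R}^n,g_{\mathrm{eucl}})$ is noncompact and oriented, and its scalar curvature $S_0\equiv 0$ is nonpositive. Third, $n\ge 3$ is assumed. A bounded domain in $\mathbb{R}^n$ is relatively compact, so it is a bounded domain in the sense used throughout this section. Hence if $\Sigma$ is a complete oriented \emph{noncompact} stable minimal hypersurface in $\mathbb{R}^{n+1}$, Theorem~\ref{thmnc} says immediately that $\Sigma$ cannot be conformally equivalent to any bounded domain $\Omega\subset\mathbb{R}^n$.

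\textbf{Excluding the compact case.} This is the only point needing care, since the corollary does not assume noncompactness. I would use the standard fact that there is no closed minimal hypersurface immersed in $\mathbb{R}^{n+1}$. Indeed, the coordinate functions $x_i$ restricted to $\Sigma$ satisfy $\Delta_g x_i=0$ because $\Sigma$ is minimal, so on a closed $\Sigma$ they must be constant. That would force $\Sigma$ to be a point, which is impossible.

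Alternatively, the compact case is ruled out on the conformal side as well. A domain $\Omega\subset\mathbb{R}^n$ is an open subset of $\mathbb{R}^n$, hence a noncompact manifold, so it cannot be homeomorphic (let alone conformally equivalent) to a compact $\Sigma$. Either argument disposes of this case.

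I do not expect any genuine obstacle here. The only step beyond quoting Theorem~\ref{thmnc} is making sure the compact case is excluded, and that is handled by either of the two elementary observations above.
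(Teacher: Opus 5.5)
Your proposal is correct and matches the paper's route: the corollary is obtained by specializing Theorem~\ref{thmnc} to $M=\mathbb{R}^{n+1}$ and $M_0=\mathbb{R}^n$ with their flat metrics. Your separate treatment of the compact case is valid, whether via harmonic coordinate functions or via noncompactness of open subsets of $\mathbb{R}^n$; the paper leaves this detail implicit.
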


\medskip
We now consider the higher-dimensional case in the compact setting. Let $(\Sigma, g)$ be an $n$-dimensional compact Riemannian manifold for $n\geq 3$. The \textit{Yamabe constant} of   the conformal class $[g]$  is defined by
$$
Y(\Sigma,[g]) :=\inf_{u>0} \frac{\int_{\Sigma} \left(\frac{4(n-1)}{n-2}|\nabla u|^2+S_{\Sigma} u^2\right)dv_{g}}{\left(\int_{\Sigma} u^{\frac{2n}{n-2}}dv_{g}\right)^{\frac{n-2}{n}}}.
$$
Equivalently, if $\tilde g=u^{\frac{4}{n-2}}g$, this quotient is the normalized total scalar curvature of $\tilde g$. We then define the \textit{smooth Yamabe invariant}, also called the \textit{$\sigma$-invariant}, by
$$
\sigma(\Sigma):=\sup_{[g]}Y(\Sigma,[g]),
$$
where the supremum is taken over all smooth Riemannian metrics on $\Sigma$.  By definition,  $Y(\Sigma,[g])$ depends only on the conformal class $[g]$. 
When $n=2$, the Yamabe functional reduces to
$$
E(g)=\int_{\Sigma} S\,dv_g=\int_{\Sigma} 2K\,dv_g,
$$
where $S$ denotes the scalar curvature of $\Sigma$ and $K$ its Gaussian curvature. In this case,
$$
\sigma(\Sigma)=4\pi\chi(\Sigma),
$$
where $\chi(\Sigma)$ is the Euler characteristic of $\Sigma$. Thus the Yamabe invariant may be viewed as a higher-dimensional analogue of the Euler characteristic.

\begin{lem}\label{y}
Let $M^{n+1}$ be an oriented  complete Riemannian manifold with nonnegative scalar curvature for $n\ge 3$. If $\Sigma^n$ is an oriented compact stable minimal hypersurface in $M$ with $\sigma(\Sigma)\le 0$, then $\Sigma$ is totally geodesic and Ricci curvature ${\rm Ric_M}$ of $M$ satisfies ${\rm Ric_M}(\eta,\eta)=0$ along $\Sigma$, where $\eta$ is a unit normal vector field along $\Sigma$. Moreover, the scalar curvature of $M$ vanishes on $\Sigma$.
\end{lem}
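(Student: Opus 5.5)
Since $\Sigma$ is compact, we work with the first eigenfunction of the Jacobi operator. Stability says that the first eigenvalue $\lambda_1$ of $-L$ is nonnegative, where $L=\Delta+|A|^2+\Ric_M(\eta,\eta)$. Let $f>0$ be a corresponding eigenfunction, so that $Lf=-\lambda_1 f\le 0$. Equivalently, for every $\varphi\in C^\infty(\Sigma)$,
$$\int_\Sigma |\nabla\varphi|^2-\bigl(|A|^2+\Ric_M(\eta,\eta)\bigr)\varphi^2\,dv_g\ge 0 .$$
As in the proof of Theorem~\ref{thmnc}, the Gauss equation gives $\Ric_M(\eta,\eta)=\frac{S_M-S-|A|^2}{2}$. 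Substituting, the stability inequality becomes
$$\int_\Sigma |\nabla\varphi|^2+\frac{S}{2}\varphi^2\,dv_g\ \ge\ \int_\Sigma \frac{S_M+|A|^2}{2}\varphi^2\,dv_g\ \ge 0 .$$

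Next we compare with the Yamabe quotient. Since $\frac{4(n-1)}{n-2}\ge 2$ for $n\ge 3$, we have for every $u>0$
$$\int_\Sigma \tfrac{4(n-1)}{n-2}|\nabla u|^2+Su^2 \ \ge\ 2\int_\Sigma |\nabla u|^2+\tfrac{S}{2}u^2\ \ge\ \int_\Sigma (S_M+|A|^2)u^2\ \ge 0 .$$
Hence $Y(\Sigma,[g])\ge 0$, and so $\sigma(\Sigma)\ge Y(\Sigma,[g])\ge 0$. Combined with the hypothesis $\sigma(\Sigma)\le 0$, this forces $Y(\Sigma,[g])=0$.

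The main step is to turn this equality into pointwise information. On a compact manifold the Yamabe infimum is attained by a smooth positive minimizer $u$; this is the Yamabe problem (Trudinger, Aubin, Schoen). If one prefers to avoid it, one can argue with the first eigenfunction of the conformal Laplacian instead: $Y=0$ holds exactly when that first eigenvalue is $0$, and the eigenfunction is positive. In either case we obtain $u>0$ with $\int_\Sigma \frac{4(n-1)}{n-2}|\nabla u|^2+Su^2=0$. Plugging this $u$ into the chain of inequalities above, every inequality becomes an equality. This yields
$$\int_\Sigma (S_M+|A|^2)u^2=0 \qquad\text{and}\qquad \Bigl(\tfrac{4(n-1)}{n-2}-2\Bigr)\int_\Sigma|\nabla u|^2=0 .$$
Since $u>0$, $S_M\ge 0$ and $|A|^2\ge 0$, the first identity gives $|A|\equiv 0$ and $S_M\equiv 0$ on $\Sigma$. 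The second gives that $u$ is constant, because $\frac{4(n-1)}{n-2}>2$. Then $\int_\Sigma S=0$, and since $u$ is a constant eigenfunction of $-\frac{4(n-1)}{n-2}\Delta+S$ with eigenvalue $0$, in fact $S\equiv 0$. Returning to the Gauss equation, we conclude $\Ric_M(\eta,\eta)=\frac{S_M-S-|A|^2}{2}=0$ along $\Sigma$. This proves that $\Sigma$ is totally geodesic, that $\Ric_M(\eta,\eta)=0$ along $\Sigma$, and that $S_M$ vanishes on $\Sigma$.

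The delicate point is the existence of a minimizer (or zero-eigenfunction) that lets us pass from the integral inequality to pointwise equality. I expect the eigenfunction route to be the cleanest, since it uses only the compactness of $\Sigma$ and standard elliptic theory.
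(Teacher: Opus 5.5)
Your proof is correct and follows essentially the same route as the paper: the Gauss equation plus stability give $Y(\Sigma,[g])\ge 0$, hence $Y(\Sigma,[g])=0$. A positive $u$ with vanishing conformal-Laplacian energy, combined with $\frac{4(n-1)}{n-2}>2$, then forces $u$ to be constant and $S_\Sigma\equiv 0$.

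The differences are minor. You read off $|A|\equiv 0$ and $S_M\equiv 0$ directly from the equality case of your chain, whereas the paper reapplies stability with a constant test function after obtaining $S_\Sigma\equiv 0$. Your remark that the first (zero) eigenfunction of the conformal Laplacian can replace the full Yamabe problem is valid, since when $Y=0$ the two coincide.
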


\begin{proof}
Let $g$ be the induced metric on $\Sigma$. Since $\Sigma$ is minimal, the Gauss equation gives
$$
S_\Sigma=S_M-2\Ric_M(\eta,\eta)-|A|^2,
$$
where $S_\Sigma$ and $S_M$ denote the scalar curvatures of $\Sigma$ and $M$, respectively, and $A$ is the second fundamental form of $\Sigma$. Thus
$$
\Ric_M(\eta,\eta)+|A|^2=\frac{1}{2}\bigl(S_M-S_\Sigma+|A|^2\bigr).
$$
Since $\Sigma$ is stable, for every $f\in C^\infty(\Sigma)$ we have
\begin{align*}
0
&\le \int_\Sigma \Bigl(|\nabla f|^2-(\Ric_M(\eta,\eta)+|A|^2)f^2\Bigr)\,dv_g \\
&= \int_\Sigma \left(|\nabla f|^2+\frac{1}{2}(S_\Sigma-S_M-|A|^2)f^2\right)\,dv_g.
\end{align*}
Because $S_M\ge 0$, it follows that
$$
\int_\Sigma \left(\frac{4(n-1)}{n-2}|\nabla f|^2+S_\Sigma f^2\right)\,dv_g\ge 0
$$
for all $f\in C^\infty(\Sigma)$. Hence 
$$
Y(\Sigma, [g])\ge 0.
$$
Moreover, since $\sigma(\Sigma)\le 0$, we see that
$$
Y(\Sigma, [g])=0.
$$
By the solution of the Yamabe problem, there exists a positive smooth function $u$ on $\Sigma$ such that the conformal metric
$$
\tilde g=u^{\frac{4}{n-2}}g
$$
has scalar curvature $\tilde S_\Sigma\equiv 0$. Equivalently,
$$
-\frac{4(n-1)}{n-2}\Delta u+S_\Sigma u=0
$$
on $\Sigma$. Multiplying by $u$ and integrating over $\Sigma$, we obtain
\begin{align}
\int_\Sigma \left(\frac{4(n-1)}{n-2}|\nabla u|^2+S_\Sigma u^2\right)\,dv_g=0. \label{id:01}
\end{align}
Applying the stability inequality with $f=u$, we also have
\begin{align}
0\le \int_\Sigma \left(2|\nabla u|^2+S_\Sigma u^2\right)\,dv_g. \label{ineq:02}
\end{align}
Since $\frac{4(n-1)}{n-2}>2$, from (\ref{id:01}) and (\ref{ineq:02}), it follows that $\nabla u=0$, which implies that $u$ is constant. Hence we see that
$$
S_\Sigma\equiv 0
$$
on $\Sigma$. Using the stability inequality again together with $S_\Sigma\equiv 0$, we obtain
$$
\int_\Sigma (S_M+|A|^2)\,dv_g\le 0.
$$
Since $S_M\ge 0$, it follows that
$$
S_M\equiv 0
\qquad\text{and}\qquad
|A|^2\equiv 0
$$
along $\Sigma$. In particular, $\Sigma$ is totally geodesic. The Gauss equation then reduces to
$$
0=S_\Sigma=S_M-2\Ric_M(\eta,\eta)-|A|^2,
$$
and therefore
$$
\Ric_M(\eta,\eta)=0
$$
along $\Sigma$.
\end{proof}

The second author \cite[Theorem 1.4]{Seo12} proved that if $M^{n+1}$ is compact with scalar curvature $S_M\ge k>0$ and $\Sigma\subset M$ is a compact stable minimal hypersurface, then  $\sigma(\Sigma)>0$ for $n\ge3$ and $\int_\Sigma S_\Sigma d\mu \ge k\operatorname{Vol}(\Sigma)$. In the following theorem, we allow the ambient scalar curvature to have a nonnegative lower bound, including the borderline case $k=0$, and establish rigidity in the equality cases.

\begin{thm}  \label{thmc}
Let $M$ be an oriented complete Riemannian manifold of dimension $n+1$ for $n\ge 3$ and assume that its scalar curvature satisfies $S_M\ge k$ for some constant $k\ge 0$. Let $\Sigma\subset M$ be an oriented compact stable minimal hypersurface without boundary. Denote by $S_\Sigma$ the scalar curvature of $\Sigma$. Then we have the following.
    \begin{itemize}
        \item[{\rm (i)}] $\sigma(\Sigma)\geq0$. Moreover, if equality holds, then $\Sigma$ is totally geodesic and $S_\Sigma\equiv 0$ on $\Sigma$.
        \item[{\rm (ii)}] $\is S_{\Sigma} \geq k \mathrm{Vol} (\Sigma)$. Moreover, if equality holds, then $\Sigma$ is totally geodesic.
    \end{itemize}
\end{thm}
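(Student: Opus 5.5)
For part (i), I will start from the stability inequality rewritten through the Gauss equation, exactly as in the proof of Lemma~\ref{y}. For every $f\in C^\infty(\Sigma)$ it gives
\begin{equation*}
0\le \int_\Sigma \Bigl(|\nabla f|^2+\tfrac12(S_\Sigma-S_M-|A|^2)f^2\Bigr)\,dv_g .
\end{equation*}
Since $S_M\ge k\ge 0$ and $|A|^2\ge 0$, multiplying by $2$ and using $\frac{4(n-1)}{n-2}>2$ shows that the Yamabe quotient of every positive $u$ is nonnegative. Hence $Y(\Sigma,[g])\ge 0$ for the induced metric $g$, and so $\sigma(\Sigma)\ge Y(\Sigma,[g])\ge 0$.

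If $\sigma(\Sigma)=0$, then $\sigma(\Sigma)\le 0$, and Lemma~\ref{y} applies directly. It yields that $\Sigma$ is totally geodesic, that $\Ric_M(\eta,\eta)=0$, and that $S_M=0$ along $\Sigma$. The proof of that lemma also gives $S_\Sigma\equiv 0$; alternatively, this follows from the Gauss equation $S_\Sigma=S_M-2\Ric_M(\eta,\eta)-|A|^2=0$. This settles (i).

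For part (ii), I will insert the constant test function $f\equiv 1$ into the same stability inequality, which is allowed because $\Sigma$ is compact without boundary. This gives
\begin{equation*}
\int_\Sigma S_\Sigma\,dv_g \ge \int_\Sigma (S_M+|A|^2)\,dv_g \ge k\,\mathrm{Vol}(\Sigma).
\end{equation*}
If equality holds throughout, then $\int_\Sigma \bigl((S_M-k)+|A|^2\bigr)\,dv_g=0$. Both integrands are nonnegative, so $|A|^2\equiv 0$ and $\Sigma$ is totally geodesic. As a byproduct, $S_M\equiv k$ on $\Sigma$.

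None of these steps is a real obstacle. The only point needing care is the equality case of (i). The definition of $\sigma$ as a supremum gives $Y(\Sigma,[g])\le\sigma(\Sigma)=0$, which combined with $Y(\Sigma,[g])\ge 0$ forces $Y(\Sigma,[g])=0$. The Yamabe solution argument in Lemma~\ref{y} is exactly what converts this into constancy of the conformal factor, so the hypotheses of that lemma are met verbatim.
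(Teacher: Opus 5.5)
Your proposal is correct and follows essentially the same route as the paper. For $\sigma(\Sigma)\ge 0$ you combine the stability inequality rewritten via the Gauss equation with $\frac{4(n-1)}{n-2}>2$. For the equality case of (i) you use Lemma~\ref{y} together with the Gauss equation. For (ii) and its equality case you test stability with the constant function $\varphi\equiv 1$.
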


\begin{proof}
    By the stability of $\Sigma$, we have
    \begin{align}\label{2.7}
        0 & \leq \is 2|\nabla \varphi|^2+(S_{\Sigma}-S_M-|A|^2)\varphi^2 \nonumber\\
          & \leq \is \frac{4(n-1)}{n-2}|\nabla \varphi|^2 + S_{\Sigma} \varphi^2 - (S_M + |A|^2)\varphi^2
    \end{align}
    for any smooth function $\varphi$ defined on $\Sigma$.
    In (\ref{2.7}), we have used  the fact that $2<\frac{4(n-1)}{n-2}$ for $n\geq3$. Then the assumption  that  $S_M \geq k\geq 0$ gives
    \begin{equation*}
        0 \leq \is (S_M+|A|^2)\varphi^2
          \leq \is \frac{4(n-1)}{n-2}|\nabla \varphi|^2+S_{\Sigma} \varphi^2,
    \end{equation*}
    which implies
    \begin{equation*}
        0 \leq \frac{\is \frac{4(n-1)}{n-2}|\nabla \varphi|^2 + S_{\Sigma}\varphi^2}{\left(\is \varphi^{\frac{2n}{n-2}}\right)^{\frac{n-2}{n}}},
    \end{equation*}
which shows that $\sigma(\Sigma)\geq0$.

We now consider the equality case. Suppose that $\sigma(\Sigma)=0$. Then, by Lemma \ref{y}, $\Sigma$ is totally geodesic, $\Ricm(\eta,\eta)=0$ and $S_M=0$ along $\Sigma$. Applying the Gauss formula, we have
$$
\Ricm(\eta,\eta)=\frac{1}{2}\bigl(S_M-S_\Sigma-|A|^2\bigr).
$$
    It then follows that $S_{\Sigma}=0$ on $\Sigma$.

    For {\rm (ii)}, the stability of $\Sigma$ allows
    \begin{equation}\label{2.8}
        0 \leq \is|A|^2\varphi^2 \leq \is 2|\nabla \varphi|^2 + (S_{\Sigma} - S_M)\varphi^2
    \end{equation}
    for any smooth function $\varphi$ defined on $\Sigma$.
    Since $S_M \geq k \geq 0$, the inequality (\ref{2.8}) becomes that
    $$k\is \varphi^2 \leq \is 2|\nabla \varphi|^2 + S_{\Sigma} \varphi^2.$$
    Choosing $\varphi=1$ on $\Sigma$ gives
    $$k\,\vol(\Sigma) \leq \is S_{\Sigma}.$$
    Now suppose that the equality holds.
    Since $$\is S_{\Sigma} = k\vol(\Sigma) = \is k \geq 0,$$
    taking $\varphi=1$ on $\Sigma$ and the stability of $\Sigma$ yield
    $$0 \leq \is S_{\Sigma} - S_M -|A|^2 =\is k - S_M - |A|^2.$$
    Then we get
    $$\is S_M + |A|^2 \leq \is k,$$
    and combining with the fact that $k \leq S_M \leq S_M + |A|^2$ implies
    $$S_M=k \quad \text{and} \quad |A|^2=0 \quad \text{on} \quad \Sigma.$$
    Hence $\Sigma$ is totally geodesic.
\end{proof}

\begin{rmk}
{\rm
The converse statements in the equality cases of {\rm (i)} and {\rm (ii)} are false in general. For {\rm (i)}, observe that the round sphere $\mathbb{S}^n$ carries a metric of positive scalar curvature. Hence, by Kazdan-Warner \cite[Theorem 1.3]{KW}, there exists a smooth metric $g_0$ on $\mathbb{S}^n$ such that
$$
S_{g_0}\equiv 0,
$$
where $S_{g_0}$ denotes the scalar curvature of $(\mathbb{S}^n,g_0)$. Now consider the product manifold $M=(\mathbb{S}^n,g_0)\times \mathbb{R}$ and the slice $\Sigma=\mathbb{S}^n\times\{0\}$. Then $\Sigma$ is a compact totally geodesic hypersurface in $M$, and thus it is a stable minimal hypersurface. Moreover, the induced metric on $\Sigma$ is exactly $g_0$. It follows that $S_\Sigma\equiv 0$. However, we have
$$
\sigma(\Sigma)=\sigma(\mathbb{S}^n)>0,
$$
since the smooth Yamabe invariant depends only on the underlying smooth manifold and $\mathbb{S}^n$ admits metrics of positive scalar curvature. Thus the converse in the equality case of {\rm (i)} fails. For {\rm (ii)}, consider the product manifold $M=\mathbb{S}^n\times\mathbb{R}$ with the standard product metric, and let $\Sigma=\mathbb{S}^n\times\{0\}$. Then $\Sigma$ is again compact and totally geodesic. Taking $k=0$, the condition $S_M\ge k$ is trivially satisfied. On the other hand,
$$
\int_\Sigma S_\Sigma\,dv_g
=
n(n-1)\Vol(\Sigma)
>
0
=
k\,\Vol(\Sigma).
$$
Therefore, even if $\Sigma$ is totally geodesic, equality in {\rm (ii)} need not hold. \\

Note that both the cases $\sigma(\Sigma)>0$ and $\sigma(\Sigma)=0$ in {\rm (i)} can indeed occur. For example, $\mathbb{T}^n\times \mathbb{R}$ contains a stable, totally geodesic, flat hypersurface $\mathbb{T}^n$ with $\sigma(\mathbb{T}^n)=0$, where $\mathbb{T}^n$ is the $n$-dimensional torus. Likewise, $\mathbb{S}^n\times \mathbb{R}$ contains a stable hypersurface $\mathbb{S}^n$ with $\sigma(\mathbb{S}^n)>0$, where $\mathbb{S}^n$ is the $n$-dimensional sphere.
}
\end{rmk}

\section{Rigidity and eigenvalue estimates for stable minimal hypersurfaces in Riemannian manifolds with pinched sectional curvature} \label{sec4}

Let $\Sigma$ be a two-sided hypersurface in an $(n+1)$-dimensional Riemannian manifold $M$. Choose a local orthonormal frame $e_1,\ldots,e_{n+1}$ on $M$ such that $e_1,\ldots,e_n$ are tangent to $\Sigma$ and $e_{n+1}=\eta$ is a unit normal vector field along $\Sigma$. We write
$$
h_{ij}=A(e_i,e_j)
$$
for the components of the second fundamental form $A$ of $\Sigma$ and denote by
$$
h_{ijk}:=h_{ij;k}=(\nabla_{e_k}A)(e_i,e_j)
$$
the components of its covariant derivative. With respect to the same frame, we denote by $K_{ijkl}$ the components of the curvature tensor of $M$, and by $K_{ijkl;m}$ their covariant derivatives. We follow the notation in \cite{Schoen&75}. Assume that the sectional curvature of $M$ satisfies
$$
K_1\le K_M\le K_2
$$
for some constants $K_1$ and $K_2$, and that the covariant derivative of the curvature tensor satisfies
$$
|\nabla K|^2
=
\sum_{i,j,k,l,m}K_{ijkl;m}^2
\le K_3^2|A|^2
$$
for some constant $K_3\ge 0$. Define the constant $\Gamma_\Sigma$ by
$$
\Gamma_\Sigma
:=
\sup_\Sigma
\bigl(
-(n^2+5n)K_1+2nK_2+4K_3+|A|^2+S_\Sigma
\bigr),
$$
where $S_\Sigma$ denotes the scalar curvature of $\Sigma$.

The refined Kato inequality used below requires the second fundamental form
to be a Codazzi tensor. Recall that, with respect to the above frame, the
Codazzi equation is
$$
h_{ijk}-h_{ikj}=K_{n+1ikj}.
$$
We therefore assume, when stated in the theorems below, that
$$
K_{n+1ijk}=0
\qquad\text{for all }1\le i,j,k\le n.
$$
Under this assumption, the Codazzi equation implies that the second
fundamental form $A$ is a Codazzi tensor. This condition is automatically
satisfied when $M$ has constant sectional curvature. Since the second fundamental form $A$ is trace-free by minimality, the following lemma is a direct consequence of the refined Kato inequality for trace-free Codazzi tensors due to Bourguignon \cite{Bourguignon81} and will be used later.

\begin{lem}[\cite{Bourguignon81}] \label{lem: Kato}
Let $\Sigma^n$ be a two-sided minimal hypersurface in a Riemannian manifold $M$. Suppose that its second fundamental form $A$ is a Codazzi tensor. Then, at every point where $|A|\neq0$, we have
$$
|\nabla A|^2-|\nabla|A||^2\ge \frac{2}{n}|\nabla|A||^2.
$$
\end{lem}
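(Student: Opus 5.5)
The plan is a pointwise linear-algebra argument. Fix a point where $|A|\neq 0$ and work in the adapted orthonormal frame $e_1,\dots,e_n$ of the paper, writing $T=\nabla A$ with components $h_{ijk}$. The Codazzi hypothesis says $h_{ijk}$ is totally symmetric in $i,j,k$. Minimality gives $\sum_i h_{ii}=0$, and differentiating this gives $\sum_i h_{iik}=0$ for every $k$. By symmetry every trace of $T$ then vanishes. The target inequality is equivalent to
$$
|\nabla A|^2\ge \frac{n+2}{n}\,|\nabla|A||^2 .
$$

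First I compute $\nabla|A|$. From $|A|\,\nabla_k|A|=\sum_{i,j}h_{ij}h_{ijk}$ and Cauchy--Schwarz one gets only $|\nabla|A||\le|\nabla A|$, which is the ordinary Kato inequality. To gain the factor $\frac{n+2}{n}$, I fix a unit vector $v$, choose the frame so that $v=e_k$, and set
$$
X_k=\sum_{i,j}h_{ij}h_{ijk},\qquad \nabla_k|A|=\frac{X_k}{|A|}.
$$
Then $|A|^2|\nabla|A||^2=|X|^2$, so it suffices to show
$$
|X|^2\le \frac{n}{n+2}\,|A|^2|T|^2 .
$$

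The natural route is to decompose $T$ orthogonally. Let $\omega_k:=X_k/|A|^2$. I would split $T=T^0+T^1$, where $T^1$ is the totally symmetric, trace-free part built from the tensor $A\otimes\omega$, namely the symmetrization of $h_{ij}\omega_k$ with trace corrections removed. One then checks that $\langle T^0, h\otimes\omega\rangle=0$ and bounds $|T|^2\ge|T^1|^2$.

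The cleaner and standard alternative is to diagonalize $A$ at the point, $h_{ij}=\lambda_i\delta_{ij}$, and treat each direction separately. I would first try this diagonalized route. For fixed $k$, Cauchy--Schwarz with the constraint $\sum_i h_{iik}=0$ gives
$$
X_k^2=\Bigl(\sum_i\lambda_i h_{iik}\Bigr)^2\le |A|^2\sum_i h_{iik}^2 .
$$
Summing over $k$ yields $|X|^2\le |A|^2\sum_{i,k}h_{iik}^2$. Now use symmetry: each entry $h_{iik}$ with $i\neq k$ appears three times in $|T|^2$ (as $h_{iik},h_{iki},h_{kii}$). Combining this counting with the trace condition $h_{kkk}=-\sum_{i\neq k}h_{iik}$, via the bound $h_{kkk}^2\le (n-1)\sum_{i\neq k}h_{iik}^2$, the aim is to obtain
$$
\sum_{i,k}h_{iik}^2\le \frac{n}{n+2}\,|T|^2 .
$$

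This counting and optimization step is the main obstacle: a naive count loses the sharp constant. The standard remedy is to choose the frame adapted to the gradient direction, i.e. rotate so that $\nabla|A|$ is parallel to $e_1$, and accept that $A$ is then not diagonal. In that frame only $X_1\neq 0$. Cauchy--Schwarz gives $X_1^2\le|A|^2\sum_{i,j}h_{ij1}^2$, and one must then prove
$$
\sum_{i,j}h_{ij1}^2\le\frac{n}{n+2}|T|^2,
$$
using the symmetry of $T$ (off-diagonal entries $h_{1j1}$ recur in $h_{11j}$, etc.) and the trace-free relations $\sum_i h_{ii1}=0$, $\sum_i h_{iij}=0$. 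This is Bourguignon's algebraic lemma, and I would verify it by splitting $h_{ij1}$ into the cases $i=j=1$, exactly one index equal to $1$, and $i,j\ge2$, then optimizing with the trace constraints. Since the statement is attributed to \cite{Bourguignon81}, ultimately I would cite that lemma for this sharp constant.
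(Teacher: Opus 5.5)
Your proposal has a genuine gap. The decisive estimate $\sum_{i,k}h_{iik}^2\le\frac{n}{n+2}|\nabla A|^2$, or its adapted-frame analogue, is never proved. Your fallback of citing Bourguignon amounts to citing the statement you are asked to prove.

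Your reason for abandoning the diagonalized route is also mistaken: the naive count does not lose the sharp constant. Drop the terms of $|\nabla A|^2$ with three distinct indices and use the threefold multiplicity you already noted. With $a_k:=h_{kkk}^2$ and $b_k:=\sum_{i\neq k}h_{iik}^2$, this gives
\[
|\nabla A|^2\ \ge\ \sum_k\bigl(a_k+3b_k\bigr).
\]
For each fixed $k$, the bound $a_k\le(n-1)b_k$ (from $h_{kkk}=-\sum_{i\neq k}h_{iik}$) gives
\[
a_k+3b_k-\frac{n+2}{n}\,(a_k+b_k)=\frac{2}{n}\bigl((n-1)b_k-a_k\bigr)\ \ge\ 0 .
\]
Summing over $k$ yields $|\nabla A|^2\ge\frac{n+2}{n}\sum_{i,k}h_{iik}^2$. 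Combined with your Cauchy--Schwarz bound $|A|^2|\nabla|A||^2=|X|^2\le|A|^2\sum_{i,k}h_{iik}^2$, this is the lemma. That is exactly the paper's proof: you already had every ingredient, and only this per-$k$ comparison was missing.

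Your adapted-frame route ($\nabla|A|\parallel e_1$, $A$ not diagonal) is also correct, but you left it unverified and it costs more. Set
\[
a=h_{111}^2,\quad b=\sum_{j\ge2}h_{11j}^2,\quad c=\sum_{i,j\ge2}h_{1ij}^2,\quad d=\sum_{i,j,l\ge2}h_{ijl}^2.
\]
Then $\sum_{i,j}h_{ij1}^2=a+2b+c$ and $|\nabla A|^2=a+3b+3c+d$, so the target reduces to $2a-2(n-1)c+(4-n)b\le nd$.
\begin{itemize}
\item The trace condition $\sum_{i\ge2}h_{1ii}=-h_{111}$ gives $c\ge a/(n-1)$, which disposes of the $a$ and $c$ terms. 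This alone suffices for $n\ge4$.
\item For $n=2,3$ you additionally need $d\ge\frac{3}{n+1}b$. This follows from $\sum_{i\ge2}h_{iij}=-h_{11j}$ by a weighted Cauchy--Schwarz over the multiplicities in $d$.
\end{itemize}
Diagonalizing $A$ avoids this bookkeeping, because $\nabla_k|A|$ then involves only the entries $h_{iik}$. The comparison with $|\nabla A|^2$ therefore decouples into the one-variable inequalities above.
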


\begin{proof}
We include the proof for completeness. Fix a point at which $|A|\neq0$ and choose an orthonormal frame diagonalizing $A$, so that $h_{ij}=\lambda_i\delta_{ij}$. Since $A$ is Codazzi, the covariant derivatives $h_{ijk}$ are symmetric in all three indices. Since $\Sigma$ is minimal, $\sum_i h_{iik}=0$ for every $k$. Hence, for each fixed $k$,
$$
h_{kkk}=-\sum_{i\neq k}h_{iik},
$$
and the Cauchy-Schwarz inequality gives
\begin{align}
h_{kkk}^2\le (n-1)\sum_{i\neq k}h_{iik}^2. \label{ineq: Cauchy}
\end{align}
Then, since $h_{ijk}$ is symmetric in all three indices,
$$
|\nabla A|^2
=
\sum_{i,j,k}h_{ijk}^2
\geq
\sum_k
\left(
h_{kkk}^2
+
3\sum_{i\neq k}h_{iik}^2
\right).
$$
For each $k$, it follows from (\ref{ineq: Cauchy}) that
\begin{align*}
&h_{kkk}^2+3\sum_{i\neq k}h_{iik}^2
-
\left(1+\frac{2}{n}\right)
\sum_i h_{iik}^2\\
&=
h_{kkk}^2+3\sum_{i\neq k}h_{iik}^2
-
\left(1+\frac{2}{n}\right)
\left(
h_{kkk}^2+\sum_{i\neq k}h_{iik}^2
\right)\\
&=
\frac{2}{n}
\left(
(n-1)\sum_{i\neq k}h_{iik}^2-h_{kkk}^2
\right)
\geq0.
\end{align*}
Therefore,
$$
h_{kkk}^2+3\sum_{i\neq k}h_{iik}^2
\geq
\left(1+\frac{2}{n}\right)
\sum_i h_{iik}^2.
$$
Summing over $k$ yields
\begin{align}
|\nabla A|^2 \geq \left(1+\frac{2}{n}\right) \sum_{i,k}h_{iik}^2. \label{ineq: A}
\end{align}
On the other hand, since
$$
|A|^2=\sum_i\lambda_i^2,
$$
we have
$$
e_k(|A|)
=
\frac{1}{|A|}
\sum_i\lambda_i h_{iik}.
$$
Thus
\begin{align}
|\nabla|A||^2 &= \frac{1}{|A|^2}\sum_k\left(\sum_i\lambda_i h_{iik}\right)^2 \nonumber \\
&\leq \frac{1}{|A|^2} \sum_k\left(\sum_i\lambda_i^2\right)\left(\sum_i h_{iik}^2\right) \nonumber\\
&=\sum_{i,k}h_{iik}^2. \label{ineq: B}
\end{align}
Combining (\ref{ineq: A}) and (\ref{ineq: B}), we obtain
$$
|\nabla A|^2
\geq
\left(1+\frac{2}{n}\right)|\nabla|A||^2,
$$
which proves the lemma.

\end{proof}

Under a pinching condition on the sectional curvature, we obtain the following rigidity result.

\begin{thm}\label{thmftc1}
Let $M$ be an $(n+1)$-dimensional complete Riemannian manifold whose sectional curvature $K_M$ satisfies $0\le K_1\le K_M\le K_2$ for some constants $K_1$ and $K_2$, and let $\Sigma$ be a complete two-sided stable minimal hypersurface in $M$. Suppose that
$$
\lim_{R\to\infty}\frac{1}{R^2}\int_{B_p(R)}|A|^2=0,
$$
where $B_p(R)$ denotes the geodesic ball in $\Sigma$ of radius $R$ centered at some point $p\in \Sigma$. Assume further that $K_{n+1ijk}=0$ for all $1\le i,j,k\le n$ and that
$$
|\nabla K|^2=\sum_{i,j,k,l,m}K_{ijkl;m}^2\le K_3^2|A|^2
$$
for some constant $K_3\ge 0$. If $\Gamma_\Sigma\le 0$, then $\Sigma$ is totally geodesic.
\end{thm}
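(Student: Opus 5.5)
We follow the do Carmo--Peng strategy. We combine a Simons-type inequality for $|A|$ with the stability inequality, using Lemma~\ref{lem: Kato} to absorb the gradient terms.

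\textbf{Step 1: Simons-type identity.} Following \cite{Schoen&75}, for a minimal hypersurface we have
$$
\tfrac12\Delta|A|^2=|\nabla A|^2+\sum h_{ij}h_{kki;j}\text{-type curvature terms}-|A|^4+\dots .
$$
We write out the terms involving $K_{ijkl}$ and $K_{ijkl;m}$. Since $K_{n+1ijk}=0$, $A$ is Codazzi and the terms with $K_{n+1ijk;l}$ only enter through derivatives. We bound them using $|\nabla K|\le K_3|A|$ together with Cauchy--Schwarz and $|h_{ij}|\le|A|$, which gives an error of at most $cK_3|A|^2$. The Ricci-type terms are bounded by the pinching $K_1\le K_M\le K_2$ in the standard way: $K_1\ge 0$ gives lower bounds, and the mixed terms are controlled by $K_2-K_1$. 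We also express $|A|^4$ via the Gauss equation, $|A|^2=S_{\Sigma}$-related plus curvature sums, i.e.\ we keep $|A|^2+S_\Sigma$ as a package. The goal is an inequality
$$
|A|\Delta|A|\ge |\nabla A|^2-|\nabla|A||^2-\bigl(|A|^2+\Ric_M(\eta,\eta)\bigr)|A|^2+\Phi\,|A|^2,
$$
where $\Phi\ge -\Gamma_\Sigma\ge0$ collects $(n^2+5n)K_1-2nK_2-4K_3-|A|^2-S_\Sigma$. We expect this bookkeeping to be the main obstacle: the coefficients must match exactly the ones in $\Gamma_\Sigma$. We would organize the computation by diagonalizing $A$ and using $\sum\lambda_i=0$.

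\textbf{Step 2: apply Kato.} By Lemma~\ref{lem: Kato}, $|\nabla A|^2-|\nabla|A||^2\ge\frac2n|\nabla|A||^2$ wherever $|A|\neq0$, and this extends weakly across the zero set. Since $\Gamma_\Sigma\le0$, we obtain
$$
|A|\Delta|A|+\bigl(|A|^2+\Ric_M(\eta,\eta)\bigr)|A|^2\ge\tfrac2n|\nabla|A||^2 .
$$

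\textbf{Step 3: test stability.} We plug $\varphi=|A|\psi$ into the stability inequality, with $\psi$ a cutoff equal to $1$ on $B_p(R)$, supported in $B_p(2R)$, and $|\nabla\psi|\le 2/R$. Expanding $|\nabla(|A|\psi)|^2$ and integrating by parts against the Step~2 inequality cancels the potential term and yields
$$
\tfrac2n\int\psi^2|\nabla|A||^2\le\int|A|^2|\nabla\psi|^2\le\frac{4}{R^2}\int_{B_p(2R)}|A|^2 .
$$
This is the same computation as in \eqref{i}. Letting $R\to\infty$ and using the growth hypothesis, we get $\nabla|A|\equiv0$, so $|A|$ is a constant $c$.

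\textbf{Step 4: conclude.} If $c\neq0$, then the growth hypothesis gives $c^2\,\mathrm{Vol}(B_p(R))=o(R^2)$. Alternatively, we return to the stability inequality with $\varphi=\psi$ and use the retained term $\Phi|A|^2\ge0$, together with $|A|^2+\Ric_M(\eta,\eta)\ge c^2$ (since $K_1\ge0$). This gives
$$
c^2\,\mathrm{Vol}(B_p(R))\le\frac{4}{R^2}\,\mathrm{Vol}(B_p(2R)) .
$$
Since $\mathrm{Vol}(B_p(R))=o(R^2)$, we have $\frac{4}{R^2}\mathrm{Vol}(B_p(2R))=o(1)$, which forces $\mathrm{Vol}(B_p(R))\to0$. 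This is impossible for a complete noncompact $\Sigma$, and it is also impossible in the compact case by taking $\psi\equiv1$. Hence $c=0$ and $\Sigma$ is totally geodesic.
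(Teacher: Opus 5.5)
Your argument has the same architecture as the paper's proof: the Schoen--Simon--Yau inequality, the Kato inequality for Codazzi tensors (Lemma~\ref{lem: Kato}), stability tested with $|A|\psi$, and then the constant-$|A|$ volume argument using $\Ric_M(\eta,\eta)\ge nK_1\ge0$. The only difference is where you use the Gauss equation. The paper puts $\Ric_M(\eta,\eta)=\frac12(S_M-S_\Sigma-|A|^2)$ and $S_M\ge n(n+1)K_1$ into the stability inequality. It is then left with the coefficient $\frac12\bigl(-(n^2+5n)K_1+2nK_2+4K_3+|A|^2+S_\Sigma\bigr)\le\frac12\Gamma_\Sigma$ in \eqref{9}. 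You put them into the Simons inequality instead, so that its potential becomes exactly the Jacobi potential $|A|^2+\Ric_M(\eta,\eta)$ and cancels against stability. Both routes give $\frac2n\int\psi^2|\nabla|A||^2\le\int|A|^2|\nabla\psi|^2$. The paper's ordering keeps the coefficient explicit, and it reuses it for Theorem~\ref{thmftc2} when $\Gamma_\Sigma>0$; dropping $\Phi|A|^2\ge0$, as you do, is specific to $\Gamma_\Sigma\le0$.

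What is missing is Step 1 itself. You leave it as bookkeeping you expect to be the main obstacle, and you describe it off-target: $|A|^4$ is not what gets rewritten via Gauss, and doing so would introduce $S_M$ with the wrong sign. The step closes in one line from the inequality the paper quotes from Schoen--Simon--Yau, namely \eqref{ki}:
\[
|A|\Delta|A|\ge|\nabla A|^2-|\nabla|A||^2-|A|^4-(2K_3-2nK_1+nK_2)|A|^2 .
\]
Your target then holds with $\Phi=\Ric_M(\eta,\eta)-(2K_3-2nK_1+nK_2)$. It is $\Ric_M(\eta,\eta)$ that the Gauss equation rewrites, and together with $S_M\ge n(n+1)K_1$ this gives
\[
\Phi\ge\tfrac12\bigl((n^2+5n)K_1-2nK_2-4K_3-|A|^2-S_\Sigma\bigr)\ge-\tfrac12\Gamma_\Sigma\ge0 .
\]
That line is exactly where the definition of $\Gamma_\Sigma$ comes from. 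Note that it gives $-\Gamma_\Sigma/2$, not $-\Gamma_\Sigma$, although only the sign matters. With it supplied, your proof is complete. The term $\Phi|A|^2$ you say you retain plays no role in Step 4, which uses only stability with $\varphi=\psi$.
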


\begin{proof}
By (1.22) and (1.27) in \cite{Schoen&75}, we have
\begin{align*}
2|A|\Delta |A|+2|\nabla |A||^2
&\ge 2\sum_{i,j,k} h_{ijk}^2-4K_3|A|^2+2n(2K_1-K_2)|A|^2-2|A|^4 \\
&=2|\nabla A|^2-4K_3|A|^2+2n(2K_1-K_2)|A|^2-2|A|^4.
\end{align*}
Hence
\begin{equation}\label{ki}
|A|\Delta |A|+2K_3|A|^2-n(2K_1-K_2)|A|^2+|A|^4
\ge
|\nabla A|^2-|\nabla |A||^2.
\end{equation}
By the assumption $K_{n+1ijk}=0$ and the Codazzi equation, the second fundamental form $A$ is a Codazzi tensor. Hence,  applying Lemma \ref{lem: Kato} to \eqref{ki}, we obtain
\begin{equation}\label{2}
|A|\Delta |A|+2K_3|A|^2-n(2K_1-K_2)|A|^2+|A|^4\ge \frac{2}{n}|\nabla |A||^2.
\end{equation}
Choose a Lipschitz function $f$ with compact support such that
\begin{equation} \label{fd}
0\leq f\leq 1,\quad f\equiv 1~ \text{on}~ B_p(R),\quad f\equiv 0~\text{on}~\Sigma \backslash B_p(2R), \quad \text{and} \quad|\nabla f|\leq \frac{1}{R},
\end{equation}
where $B_p(R)$ denotes the geodesic ball in $\Sigma$ of radius $R$ centered at $p \in \Sigma$. Multiplying \eqref{2} by $f^2$ and integrating over $\Sigma$, we get
      \begin{align}\label{4}
        \int_\Sigma f^2|A|\lap|A| + 2K_3\int_\Sigma & f^2|A|^2 -n(2K_1-K_2) \int_\Sigma  f^2|A|^2 +\int_\Sigma f^2|A|^4 \nonumber \\
        & \geq \frac{2}{n}\int_\Sigma f^2|\nabla |A||^2.
    \end{align}
Since the divergence theorem gives
      \begin{align*}
        0 & = \int_\Sigma \div(f^2|A|\nabla|A|) \\
         & = \int_\Sigma f^2|A|\lap|A| + \int_\Sigma f^2|\nabla|A||^2 + 2\int_\Sigma f|A|\langle \nabla f,\nabla|A| \rangle,
    \end{align*}
it follows from \eqref{4} that
      \begin{align}\label{5}
        (2K_3-2nK_1+n&K_2)\int_\Sigma f^2|A|^2  +\int_\Sigma f^2|A|^4  \nonumber\\
        & \geq \left(\frac{2}{n}+1\right)\int_\Sigma f^2|\nabla |A||^2 + 2\int_\Sigma f|A|\langle \nabla f,\nabla|A| \rangle.
    \end{align}
On the other hand, the stability of $\Sigma$ implies that
\begin{equation}\label{6}
\int_\Sigma \left( |\nabla \varphi|^2 + \left(\frac{S_\Sigma}{2}-\frac{S_M+|A|^2}{2}\right)\varphi^2 \right)\ge 0
\end{equation}
for every compactly supported Lipschitz function $\varphi$ on $\Sigma$, where $S_M$ denotes the scalar curvature of $M$. Since the sectional curvature of $M$ satisfies
$$
K_1\le K_M\le K_2,
$$
we have
$$
n(n+1)K_1
\le
S_M
=
\Ric_M(e_1,e_1)+\cdots+\Ric_M(e_{n+1},e_{n+1})
\le
n(n+1)K_2.
$$
Thus \eqref{6} becomes
\begin{equation}\label{7}
\int_\Sigma \left(|\nabla \varphi|^2+\frac{S_\Sigma-n(n+1)K_1-|A|^2}{2}\varphi^2\right)\ge 0.
\end{equation}
Replacing $\varphi$ by $|A|f$ in (\ref{7}), we have
\begin{align}\label{8}
\int_\Sigma |A|^2|\nabla f|^2 + \int_\Sigma & f^2|\nabla |A||^2 +  2\int_\Sigma f|A|\langle \nabla f,\nabla|A| \rangle \nonumber \\
&=\int_\Sigma |\nabla (f|A|)|^2 \nonumber \\
& \geq \int_\Sigma \frac{1}{2}f^2|A|^4 + \int_\Sigma \frac{n(n+1)K_1-S_{\Sigma}}{2}f^2|A|^2.
\end{align}
Combining the inequalities (\ref{5}) and (\ref{8}) gives
\begin{align}\label{9}
\int_\Sigma &\left(2K_3-2nK_1+nK_2 +\frac{|A|^2-n(n+1)K_1+S_{\Sigma}}{2}\right) f^2|A|^2 +  \int_\Sigma |A|^2|\nabla f|^2 \nonumber\\
& \geq \frac{2}{n}\int_\Sigma f^2|\nabla |A||^2.
\end{align}
Then the assumption $S_{\Sigma} \leq (n^2+5n)K_1-2n K_2-4 K_3-|A|^2$ allows
\begin{align}
\int_\Sigma |A|^2|\nabla f|^2 \geq \frac{2}{n}\int_\Sigma f^2|\nabla |A||^2. \label{ineq: stability}
\end{align}
Since $|\nabla f|\le 1/R$ and $\operatorname{supp}f\subset B_p(2R)$, we have
$$
\int_\Sigma |A|^2|\nabla f|^2
\le
\frac{1}{R^2}\int_{B_p(2R)}|A|^2
=
4\frac{1}{(2R)^2}\int_{B_p(2R)}|A|^2\rightarrow0.
$$
as $R \rightarrow \infty$ by assumption. Because $f\equiv1$ on $B_p(R)$, (\ref{ineq: stability}) implies
$$
\int_{B_p(R)}|\nabla|A||^2\rightarrow0,
$$
and hence $\nabla|A|\equiv0$ on  $\Sigma$. Thus $|A|\equiv c$ for some constant $c\ge0$. We claim that $c=0$. Suppose, to the contrary, that $c>0$. Applying the stability inequality to the same cut-off function $f$ and using $\Ric_M(\eta,\eta)\ge nK_1\ge0$, we obtain
$$
c^2\int_\Sigma f^2
\le
\int_\Sigma
\bigl(|A|^2+\Ric_M(\eta,\eta)\bigr)f^2
\le
\int_\Sigma|\nabla f|^2.
$$
Since $f\equiv1$ on $B_p(R)$ and $\operatorname{supp}f\subset B_p(2R)$, we see that
\begin{align}
c^2\Vol(B_p(R)) \le \frac{1}{R^2}\Vol(B_p(2R)) = \frac{1}{c^2R^2}\int_{B_p(2R)}|A|^2 \rightarrow0. \label{ineq: aa}
\end{align}
On the other hand, for any fixed $R_0>0$ and every $R\ge R_0$, 
$$c^2\Vol(B_p(R))\ge c^2\Vol(B_p(R_0))>0,$$
which is a contradiction by (\ref{ineq: aa}). Therefore $c=0$, which shows that $\Sigma$ is totally geodesic.

\end{proof}

\begin{rmk}
\rm
Under the sectional curvature assumption in Theorem~\ref{thmftc1}, we have
$$
nK_1\le \Ric_M(\eta,\eta)\le nK_2
$$
and
$$
n(n+1)K_1\le S_M\le n(n+1)K_2.
$$
Hence, by the Gauss equation,
\begin{align*}
-2nK_2
&\le -2\Ric_M(\eta,\eta) \\
&= -S_M+S_\Sigma+|A|^2 \\
&\le -n(n+1)K_1+S_\Sigma+|A|^2 \\
&\le 4nK_1-2nK_2-4K_3,
\end{align*}
where in the last step we used the assumption $\Gamma_\Sigma\le 0$. It follows that
$$
K_1\ge \frac{K_3}{n}\ge 0.
$$
In particular, in Theorem~\ref{thmftc1}, the constants $K_1$ and $K_2$ must be nonnegative.
\end{rmk}



In the Euclidean case $M=\mathbb{R}^{n+1}$, the sectional curvature vanishes identically and the ambient space is locally symmetric. Therefore, the curvature constants in Theorem~\ref{thmftc1} satisfy $K_1=K_2=K_3=0$. Moreover, the Gauss equation yields $S_\Sigma=-|A|^2,$
which shows that
$$\Gamma_\Sigma = 0.$$ Thus the following corollary follows immediately from Theorem~\ref{thmftc1}.

\begin{cor}[\cite{CP80}]\label{CP}
Let $\Sigma$ be a complete oriented stable minimal hypersurface in $\mathbb{R}^{n+1}$ satisfying
$$
\int_\Sigma |A|^2<\infty.
$$
Then $\Sigma$ is a hyperplane.
\end{cor}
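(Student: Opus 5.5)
The plan is to obtain Corollary~\ref{CP} directly from Theorem~\ref{thmftc1} by specializing to $M=\mathbb{R}^{n+1}$ and checking each hypothesis in turn.

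First, the curvature hypotheses. The Euclidean curvature tensor vanishes identically, so $K_M\equiv 0$ and we may take $K_1=K_2=0$; in particular $0\le K_1\le K_M\le K_2$ holds. Since $K_{ijkl}\equiv0$, we also have $K_{n+1ijk}=0$, and $\nabla K=0$, so $|\nabla K|^2\le K_3^2|A|^2$ holds with $K_3=0$. For a minimal hypersurface in flat space, the Gauss equation gives $S_\Sigma=-|A|^2$. Hence
$$
-(n^2+5n)K_1+2nK_2+4K_3+|A|^2+S_\Sigma=0
$$
pointwise, so $\Gamma_\Sigma=0\le0$.

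Second, orientability and growth. An oriented hypersurface in the oriented manifold $\mathbb{R}^{n+1}$ is two-sided, so the two-sidedness assumption of Theorem~\ref{thmftc1} is met. For the growth condition, finiteness of $\int_\Sigma|A|^2$ gives
$$
\frac{1}{R^2}\int_{B_p(R)}|A|^2\le \frac{1}{R^2}\int_\Sigma|A|^2\to0
$$
as $R\to\infty$, so the required $L^2$-growth condition holds.

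With all hypotheses verified, Theorem~\ref{thmftc1} shows that $\Sigma$ is totally geodesic, that is, $A\equiv0$. It remains to pass from this to the conclusion that $\Sigma$ is a hyperplane. Since $A\equiv 0$, the Gauss map is locally constant, and $\Sigma$ is connected (as is implicit), so $\Sigma$ is an open piece of a fixed hyperplane. Completeness of the induced flat metric then forces $\Sigma$ to be the entire hyperplane; if $\Sigma$ is only immersed, the immersion is a local isometry onto the hyperplane, and completeness makes it a covering of the simply connected $\mathbb{R}^n$, hence an isometry. This last step is the only one that requires any care, and it is standard.
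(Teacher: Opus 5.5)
Your proposal is correct and follows the paper's route: specialize Theorem~\ref{thmftc1} to $M=\mathbb{R}^{n+1}$ with $K_1=K_2=K_3=0$, and use $S_\Sigma=-|A|^2$ from the Gauss equation to get $\Gamma_\Sigma=0$. You also write out the routine checks the paper leaves implicit: two-sidedness, the fact that finite $\int_\Sigma|A|^2$ implies the $L^2$-growth condition, and the passage from totally geodesic to a hyperplane via completeness. All of these are correct.
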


\medskip

Let $D$ be a compact domain in a complete noncompact manifold $\Sigma$. We recall that the first eigenvalue $\lambda_1(D)$ of the  Dirichlet boundary value problem
$$
\begin{cases}
\Delta \phi+\lambda\phi=0 & \text{in } D,\\
\phi=0 & \text{on } \partial D
\end{cases}
$$
is characterized variationally by
$$
\lambda_1(D)=\inf_\phi \frac{\int_D |\nabla \phi|^2}{\int_D \phi^2},
$$
where the infimum is taken over all smooth functions $\phi$ on $D$ vanishing on $\partial D$. The first eigenvalue $\lambda_1(\Sigma)$ of $\Sigma$ is then defined by
\begin{equation*}
\lambda_1(\Sigma)=\inf_D \lambda_1(D),
\end{equation*}
where $D$ ranges over all relatively compact domains in $\Sigma$. Under a pinching assumption on the sectional curvature, we obtain the following eigenvalue estimate.

\begin{thm}\label{thmftc2}
Let $M$ be an $(n+1)$-dimensional complete Riemannian manifold whose sectional curvature $K_M$ satisfies $K_1\leq K_M\leq K_2$ for some constants $K_1$ and $K_2$, and let $\Sigma$ be a complete two-sided stable non-totally geodesic minimal hypersurface in $M$. Suppose that
$$
\lim_{R\to\infty}\frac{1}{R^2}\int_{B_p(R)}|A|^2=0,
$$
where $B_p(R)$ denotes the geodesic ball in $\Sigma$ of radius $R$ centered at some point $p\in\Sigma$. Assume further that $K_{n+1ijk}=0$ for all $1\leq i,j,k\leq n$ and 
$$
|\nabla K|^2=\sum_{i,j,k,l,m}K_{ijkl;m}^2\leq K_3^2|A|^2
$$
for some constant $K_3\geq 0$. If $\Gamma_{\Sigma}>0$, then we have
$$
\lambda_1(\Sigma)\leq \frac{n}{4}\Gamma_{\Sigma}.
$$
\end{thm}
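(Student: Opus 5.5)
The plan is to rerun the proof of Theorem~\ref{thmftc1} up to inequality \eqref{9} and then, instead of discarding the zeroth-order term, to control it with $\Gamma_\Sigma$ and compare with the Rayleigh quotient of $f|A|$. None of the steps leading to \eqref{9} use the sign of $K_1$ or of $\Gamma_\Sigma$. They only need Schoen--Simon--Yau's Simons-type inequality, the Codazzi condition $K_{n+1ijk}=0$ giving Lemma~\ref{lem: Kato}, and the stability inequality in the form \eqref{7} tested with $\varphi=f|A|$. So \eqref{9} holds verbatim for every Lipschitz cut-off $f$ as in \eqref{fd}. By the definition of $\Gamma_\Sigma$,
$$
2K_3-2nK_1+nK_2+\frac{|A|^2-n(n+1)K_1+S_\Sigma}{2}
=\frac12\bigl(-(n^2+5n)K_1+2nK_2+4K_3+|A|^2+S_\Sigma\bigr)\le \frac{\Gamma_\Sigma}{2}
$$
pointwise. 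Hence
\begin{equation*}
\frac{\Gamma_\Sigma}{2}\int_\Sigma f^2|A|^2+\int_\Sigma |A|^2|\nabla f|^2\ \ge\ \frac{2}{n}\int_\Sigma f^2|\nabla|A||^2 .
\end{equation*}

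Next I will use the variational characterization of $\lambda_1$ with test function $\phi=f|A|$, which is Lipschitz and compactly supported. This gives $\lambda_1(\Sigma)\int f^2|A|^2\le\int|\nabla(f|A|)|^2$. I expand the right-hand side and use Young's inequality with a parameter $\epsilon>0$:
$$
\int|\nabla(f|A|)|^2\le (1+\epsilon)\int f^2|\nabla|A||^2+\Bigl(1+\frac1\epsilon\Bigr)\int|A|^2|\nabla f|^2 .
$$
Inserting the previous inequality, multiplied by $n/2$, yields
$$
\lambda_1(\Sigma)\int f^2|A|^2\le (1+\epsilon)\frac{n}{4}\Gamma_\Sigma\int f^2|A|^2+C_\epsilon\int|A|^2|\nabla f|^2 ,
$$
with $C_\epsilon=(1+\epsilon)\frac n2+1+\frac1\epsilon$.

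The last term is at most $\frac{C_\epsilon}{R^2}\int_{B_p(2R)}|A|^2\to0$ by the growth hypothesis. Since $\Sigma$ is not totally geodesic, $\int_{B_p(R)}|A|^2\ge \int_{B_p(R_0)}|A|^2=:a>0$ for $R\ge R_0$, and $\int f^2|A|^2\ge a$. Dividing by $\int f^2|A|^2$ and letting $R\to\infty$ gives $\lambda_1(\Sigma)\le(1+\epsilon)\frac n4\Gamma_\Sigma$. Letting $\epsilon\to0$ gives the claim.

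The main obstacle is ensuring that \eqref{9} genuinely carries over without the hypothesis $K_1\ge0$, which was used in Theorem~\ref{thmftc1} only at the final step ($\Ric_M(\eta,\eta)\ge0$). It should also be checked that the bound $S_M\ge n(n+1)K_1$ is the correct direction for arbitrary signs, which it is. A minor point is the regularity of $|A|$ where it vanishes. I would handle this either by working with $\sqrt{|A|^2+\delta}$ and letting $\delta\to0$, or by noting that $|A|$ is Lipschitz and that the Kato inequality is only needed where $|A|\ne0$, since $\nabla|A|=0$ a.e. on $\{|A|=0\}$.
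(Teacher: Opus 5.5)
Your proof is correct. It shares all of its analytic input with the paper's proof: inequality \eqref{9}, which, as you rightly check, never uses the sign of $K_1$; its consequence \eqref{14}; the test function $f|A|$; and Young's inequality \eqref{13}. The difference lies in how you combine \eqref{13} and \eqref{14}.

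The paper eliminates $\int_\Sigma f^2|A|^2$ by dividing by $\lambda_1(\Sigma)$. This forces it to treat $\lambda_1(\Sigma)=0$ separately and to use $\Gamma_\Sigma>0$ to preserve the direction of the inequality. It then argues by contradiction from $\lambda_1(\Sigma)>\frac n4\Gamma_\Sigma$ to get $\nabla|A|\equiv0$. Finally it needs a second test function $f$, together with the growth condition, to rule out $|A|\equiv c>0$.

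You instead eliminate $\int_\Sigma f^2|\nabla|A||^2$, which bounds the Rayleigh quotient of $f|A|$ directly. You use the non-totally-geodesic hypothesis only to keep the denominator $\int_\Sigma f^2|A|^2$ bounded below by a fixed positive constant. Your route is shorter and needs neither the case distinction nor the constancy step. It also never uses $\Gamma_\Sigma>0$: for $\Gamma_\Sigma<0$ it produces a contradiction, so such a $\Sigma$ would have to be totally geodesic, in line with Theorem~\ref{thmftc1}.

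The only thing to add is the remark that the case $\Gamma_\Sigma=+\infty$ is trivial. Your algebra, like the paper's after its first line, presumes $\Gamma_\Sigma<\infty$.
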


\begin{proof}
If $\Gamma_{\Sigma}=\infty$, then the conclusion is trivial. Thus we may assume that $\Gamma_{\Sigma}<\infty$. By the variational definition of $\lambda_1(\Sigma)$, for every nonzero compactly supported Lipschitz function $\phi$ on $\Sigma$,
    \begin{equation}\label{10}
        \lambda_1(\Sigma) \leq \frac{\int_\Sigma|\nabla \phi|^2}{\int_\Sigma \phi^2}.
    \end{equation}
Choose $\phi=|A|f$ in \eqref{10}, where $f$ is the cut-off function defined in \eqref{fd}. Then
  \begin{align}\label{11}
\lambda_1(\Sigma)\int_\Sigma f^2|A|^2
&\le \int_\Sigma |\nabla (|A|f)|^2 \nonumber\\
&= \int_\Sigma f^2|\nabla |A||^2
 + \int_\Sigma |A|^2|\nabla f|^2
 + 2\int_\Sigma f|A|\langle \nabla f,\nabla |A| \rangle.
\end{align}
Moreover, Young's inequality gives
  \begin{equation}\label{12}
        2\int_\Sigma f|A|\langle \nabla f, \nabla |A|\rangle
        \leq \epsilon\int_\Sigma f^2|\nabla |A||^2 + \frac{1}{\epsilon} \int_\Sigma |A|^2|\nabla f|^2
    \end{equation}
    for every $\epsilon>0$. Plugging (\ref{12}) into (\ref{11}), we obtain
  \begin{equation}\label{13}
\lambda_1(\Sigma)\int_\Sigma f^2|A|^2
\le
\left(1+\epsilon\right)\int_\Sigma f^2|\nabla |A||^2
+ \left(1+\frac{1}{\epsilon}\right)\int_\Sigma |A|^2|\nabla f|^2.
\end{equation}
On the other hand, using the definition of $\Gamma_\Sigma$ and  (\ref{9}) gives
      \begin{align}\label{14}
        \frac{\Gamma_{\Sigma}}{2}&\int_\Sigma f^2|A|^2 + \int_\Sigma |A|^2|\nabla f|^2 \nonumber\\
        &\geq \frac{1}{2}\int_\Sigma(-(n^2+5n)K_1 + 2nK_2 + 4K_3 + |A|^2 + S_{\Sigma}) f^2|A|^2 + \int_\Sigma |A|^2|\nabla f|^2 \nonumber \\
        & = \int_\Sigma\left(2K_3-2nK_1+nK_2+\frac{|A|^2-n(n+1)K_1+S_{\Sigma}}{2}\right) f^2|A|^2 + \int_\Sigma |A|^2|\nabla f|^2 \nonumber\\
        & \geq \frac{2}{n}\int_\Sigma f^2|\nabla |A||^2.
\end{align}
If $\lambda_1(\Sigma)=0$, then the desired estimate holds trivially since $\Gamma_\Sigma>0$. Hence we may assume that $\lambda_1(\Sigma)>0$. Combining \eqref{13} and \eqref{14} and using the assumption that $\Gamma_\Sigma>0$, we see that
  \begin{equation*}
\left\{
1+\frac{\Gamma_\Sigma}{2\lambda_1(\Sigma)}\left(1+\frac{1}{\epsilon}\right)
\right\}
\int_\Sigma |A|^2|\nabla f|^2
\ge
\left\{
\frac{2}{n}-\frac{\Gamma_\Sigma}{2\lambda_1(\Sigma)}(1+\epsilon)
\right\}
\int_\Sigma f^2|\nabla |A||^2.
\end{equation*}
Now suppose, for contradiction, that
$$
\lambda_1(\Sigma)>\frac{n}{4}\Gamma_\Sigma.
$$
Then we may choose $\epsilon>0$ sufficiently small so that
$$
\frac{2}{n}-\frac{\Gamma_\Sigma}{2\lambda_1(\Sigma)}(1+\epsilon)>0.
$$
Since $|\nabla f|\le 1/R$ and $\operatorname{supp}f\subset B_p(2R)$, the growth assumption gives
$$
\int_\Sigma |A|^2|\nabla f|^2
\le
\frac{1}{R^2}\int_{B_p(2R)}|A|^2\longrightarrow0.
$$
  Because $f\equiv1$ on $B_p(R)$ and the coefficient on the right-hand side is strictly positive, we obtain $\nabla|A|\equiv0$. Hence $|A|\equiv c$ for some constant $c\ge0$.

If $c>0$, applying \eqref{10} to the cut-off function $f$ itself gives
$$
\lambda_1(\Sigma)\int_\Sigma f^2
\le
\int_\Sigma|\nabla f|^2
\le
\frac{1}{R^2}\Vol(B_p(2R))
=
\frac{1}{c^2R^2}\int_{B_p(2R)}|A|^2\longrightarrow0.
$$
Since $f\equiv1$ on $B_p(R)$, the left-hand side is bounded below by $\lambda_1(\Sigma)\Vol(B_p(R_0))>0$ for any fixed $R_0$ and all sufficiently large $R$, a contradiction. Thus $c=0$, contrary to the assumption that $\Sigma$ is not totally geodesic. This completes the proof.
\end{proof}

In the hyperbolic setting, the ambient manifold $\mathbb{H}^{n+1}$ has constant sectional curvature $-1$ and is locally symmetric. Hence $K_1=K_2=-1$ and $K_3=0$. Moreover, the Gauss equation yields $S_\Sigma=-n(n-1)-|A|^2$, which gives
$$\Gamma_\Sigma = 4n.$$
Therefore, together with the lower bound of Cheung and Leung \cite{C&L01}, Theorem~\ref{thmftc2} immediately gives the following corollary.
\begin{cor}[\cite{Seo11}]\label{S}
Let $\Sigma$ be a complete oriented stable minimal hypersurface in $\mathbb{H}^{n+1}$ satisfying
$$
\int_\Sigma |A|^2<\infty.
$$
Then we have
$$
\frac{(n-1)^2}{4}\le \lambda_1(\Sigma)\le n^2.
$$
\end{cor}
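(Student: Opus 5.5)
The corollary will follow by specializing Theorem~\ref{thmftc2} to $M=\mathbb{H}^{n+1}$ for the upper bound, and quoting Cheung--Leung \cite{C&L01} for the lower bound. First I dispose of the totally geodesic case. A totally geodesic hypersurface in $\mathbb{H}^{n+1}$ is isometric to $\mathbb{H}^n$, whose bottom of spectrum is $\lambda_1(\mathbb{H}^n)=(n-1)^2/4$. This lies in $[(n-1)^2/4,\,n^2]$, so we may assume that $\Sigma$ is not totally geodesic.

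Next I verify the hypotheses of Theorem~\ref{thmftc2}.

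\emph{Curvature constants.} Since $\mathbb{H}^{n+1}$ has constant curvature $-1$, we may take $K_1=K_2=-1$. Its curvature tensor is $K_{ijkl}=-(\delta_{ik}\delta_{jl}-\delta_{il}\delta_{jk})$.

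\emph{Codazzi condition.} Every component with exactly one index equal to $n+1$ vanishes, so $K_{n+1ijk}=0$ for $1\le i,j,k\le n$.

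\emph{Derivative bound.} The space is locally symmetric, so $\nabla K=0$ and the bound $|\nabla K|^2\le K_3^2|A|^2$ holds with $K_3=0$.

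\emph{Growth condition.} Since $\int_\Sigma|A|^2<\infty$, we have $R^{-2}\int_{B_p(R)}|A|^2\le R^{-2}\int_\Sigma|A|^2\to0$.

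\emph{Two-sidedness.} Orientability of $\Sigma$ in the orientable $\mathbb{H}^{n+1}$ gives two-sidedness.

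Now I compute $\Gamma_\Sigma$. By the Gauss equation for a minimal hypersurface in a space of constant curvature $-1$, $S_\Sigma=-n(n-1)-|A|^2$. Therefore
\[
-(n^2+5n)(-1)+2n(-1)+0+|A|^2+S_\Sigma=n^2+5n-2n-n^2+n=4n,
\]
pointwise, so $\Gamma_\Sigma=4n>0$. Theorem~\ref{thmftc2} then yields $\lambda_1(\Sigma)\le \frac{n}{4}\cdot 4n=n^2$.

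For the lower bound I would invoke Cheung--Leung: a complete minimal hypersurface in $\mathbb{H}^{n+1}$ satisfies $\lambda_1(\Sigma)\ge (n-1)^2/4$. Their argument uses the distance-type function $r$ from a point of $\mathbb{H}^{n+1}$ restricted to $\Sigma$. Minimality and the Hessian comparison in $\mathbb{H}^{n+1}$ give $\Delta_\Sigma r\ge (n-1)\coth r\ge n-1$ with $|\nabla r|\le1$. One then integrates $\phi^2\Delta r$ by parts and applies Cauchy--Schwarz to get
\[
(n-1)\int\phi^2\le 2\Big(\int\phi^2\Big)^{1/2}\Big(\int|\nabla\phi|^2\Big)^{1/2}.
\]
Since this step is cited rather than proved, there is no real obstacle here. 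The only delicate points are bookkeeping: making sure the arithmetic for $\Gamma_\Sigma$ uses the normalization $S_M=n(n+1)K$, and handling the totally geodesic case separately as above.
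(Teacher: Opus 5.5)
Your proposal is correct and follows the paper's own derivation: specialize Theorem~\ref{thmftc2} to $\mathbb{H}^{n+1}$ with $K_1=K_2=-1$ and $K_3=0$, use the Gauss equation to get $\Gamma_\Sigma=4n$, and quote Cheung--Leung for the lower bound. Your separate treatment of the totally geodesic case, where $\Sigma\cong\mathbb{H}^n$ and $\lambda_1=(n-1)^2/4$, is a worthwhile addition: Theorem~\ref{thmftc2} excludes that case, and the paper passes over it silently.
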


\section{Stability and angle function of minimal hypersurfaces in warped product manifolds} \label{sec5}

Let $(N,g_N)$ be an $n$-dimensional Einstein manifold for $n\ge 2$. Then there exists a constant $k$ such that
\begin{equation}\label{rcm}
\Ric_N=(n-1)k\,g_N.
\end{equation}
For such $(N,g_N)$, we consider the $(n+1)$-dimensional   warped product
$$
M=(0,\bar r)\times_h N,
$$
where $0<\bar r\le\infty$, endowed with the metric
$$
g=dr^2+h(r)^2g_N,
$$
where $h:(0,\bar r)\to\mathbb{R}$ is a smooth positive warping function. We do not assume that the ambient warped product is complete, since the argument is local in the ambient geometry. For the space form applications, after fixing a pole $p$, the punctured Euclidean and hyperbolic spaces have the standard representations
$$
\mathbb{R}^{n+1}\setminus\{p\}=(0,\infty)\times_r\mathbb{S}^n,
\qquad
\mathbb{H}^{n+1}\setminus\{p\}=(0,\infty)\times_{\sinh r}\mathbb{S}^n.
$$
The open hemisphere can similarly be written as $(0,\pi/2)\times_{\sin r}\mathbb{S}^n$ after removing its center point.
Let $\{\epsilon_1,\ldots,\epsilon_n\}$ be a local orthonormal frame on $N$ with respect to the metric $g_N$. Then
$$
\left\{\frac{\epsilon_1}{h},\ldots,\frac{\epsilon_n}{h},\frac{\partial}{\partial r}\right\}
$$
forms a local orthonormal frame on $M$. From now on, we shall work with the orthonormal frame
\begin{equation*}\label{b1}
\beta=\left\{\frac{\epsilon_1}{h},\ldots,\frac{\epsilon_n}{h},\frac{\partial}{\partial r}\right\}.
\end{equation*}
Denote by $\overline{\nabla}$ the Levi-Civita connection of $(M,g)$. For the conformal vector field $X=h(r)\frac{\partial}{\partial r},$
Brendle \cite[Lemma~2.2]{brendle13} proved that
\begin{equation*}
\overline{\nabla}_T X=h'(r)\,T
\end{equation*}
for every vector field $T\in \mathfrak{X}(M)$.

Let $\Sigma$ be an isometrically immersed hypersurface in $M$. Throughout this section, we assume that $\Sigma$ is \textit{two-sided}, that is, it admits a globally defined unit normal vector field $\eta$. Define the \textit{angle function} $\nu$ on $\Sigma$ by
$$
\nu=g\!\left(\frac{\partial}{\partial r},\,\eta\right).
$$
The radial vector field $\partial/\partial r$ is globally defined on the warped-product region $(0,\bar r)\times_h N$. In the space form case below, we fix a pole $p\notin\Sigma$ and let $r$ denote the distance from $p$. Throughout the remainder of this section, we  assume that $\nu$ does not change sign on $\Sigma$, and we choose the unit normal $\eta$ so that $\nu\ge 0$ on $\Sigma$. For instance, if $\Sigma$ is locally given as a graph, then $\nu>0$ on $\Sigma$. According to the proof of Proposition~2.1 in \cite{brendle13}, the Ricci curvature $\Ric_M$ of $M$ is given by
$$
\Ric_M
=
\Ric_N
-\bigl(h(r)h''(r)+(n-1)h'(r)^2\bigr)g_N
-n\frac{h''(r)}{h(r)}\,dr\otimes dr.
$$
Hence
\begin{align*}
    \Ricm & =\Ricn - \frac{h(r)h''(r)+(n-1)h'(r)^2}{h(r)^2}(g-dr\otimes dr) - n\frac{h''(r)}{h(r)}dr\otimes dr \nonumber\\
    &=\Ricn -\left(n\left(\frac{h'(r)}{h(r)}\right)^2+(\log h)''(r)\right)g - (n-1)(\log h)''(r)\,dr\otimes dr,
\end{align*}
which is equivalent to
\begin{align*}
    \Ricm(U,V)  &  = \Ricn(U^N,V^N) - \left(n\left(\frac{h'(r)}{h(r)}\right)^2+(\log h)''(r)\right)g(U,V) \\
    & \quad - (n-1)(\log h)''(r)\,dr\otimes dr(U\otimes V),
\end{align*}
where
$$
U^N = U - g\left(U,\ddr\right)\ddr, \quad
V^N = V - g\left(V,\ddr\right)\ddr
$$
denote the projections of the vector fields $U,V \in \mathfrak{X}(M)$ onto $N$, respectively. In particular,
\begin{align}
    \Ricm(\eta,\eta) &= \Ricn(\eta^N,\eta^N) - \left(n\left(\frac{h'(r)}{h(r)}\right)^2+(\log h)''(r)\right) \nonumber \\
    & \quad - (n-1)(\log h)''(r)\,\nu^2, \label{Riceta}
\end{align}
where $$\eta^N = \eta-g\left(\eta,\ddr\right)\ddr$$
is the projection of $\eta$ on $N$. Recall that $\{\epsilon_1,\ldots,\epsilon_n\}$ is a local orthonormal frame on $N$ with respect to the metric $g_N$, and that
$$
\beta=\left\{\frac{\epsilon_1}{h(r)},\ldots,\frac{\epsilon_n}{h(r)},\frac{\partial}{\partial r}\right\}
$$
is a local orthonormal frame on $M$ with respect to the metric $g$. Then the vectors $\eta^N$ and $\eta$ may be written in terms of these frames as
$$
\eta^N=\sum_{i=1}^n g_N(\eta^N,\epsilon_i)\epsilon_i
      =\sum_{i=1}^n \frac{1}{h(r)}\,g\!\left(\eta^N,\frac{\epsilon_i}{h(r)}\right)\epsilon_i,
$$
and
$$
\eta=\sum_{i=1}^n g\!\left(\eta,\frac{\epsilon_i}{h(r)}\right)\frac{\epsilon_i}{h(r)}
     +g\!\left(\eta,\frac{\partial}{\partial r}\right)\frac{\partial}{\partial r}.
$$
Thus, using \eqref{rcm}, we obtain
\begin{align*}
\Ric_N(\eta^N,\eta^N)
&= (n-1)k\, g_N(\eta^N,\eta^N) \\
&= (n-1)k \sum_{i,j} g_N(\eta^N,\epsilon_i)\, g_N(\eta^N,\epsilon_j)\, g_N(\epsilon_i,\epsilon_j) \\
&= (n-1)\frac{k}{h(r)^2}\sum_{i,j}
g\!\left(\eta^N,\frac{\epsilon_i}{h(r)}\right)
g\!\left(\eta^N,\frac{\epsilon_j}{h(r)}\right)\delta_{ij} \\
&= (n-1)\frac{k}{h(r)^2}\sum_i
g\!\left(\eta,\frac{\epsilon_i}{h(r)}\right)^2 \\
&= (n-1)\frac{k}{h(r)^2}(1-\nu^2).
\end{align*}
Combining this with \eqref{Riceta}, we have
\begin{align}\label{ric}
    \Ricm(\eta,\eta)
    & = (n-1)\frac{k}{h(r)^2}(1-\nu^2) \nonumber \\
    & \quad
    - \left(n\left(\frac{h'(r)}{h(r)}\right)^2+(\log h)''(r)\right) - (n-1)(\log h)''(r)\nu^2 \nonumber \\
    & = (n-1)\frac{k}{h(r)^2}(1-\nu^2) \nonumber \\
    & \quad -\left(n\left(\frac{h'(r)}{h(r)}\right)^2+(\log h)''(r)+(n-1)(\log h)''(r)\right) + (n-1)(\log h)''(r) \nonumber\\
    & \quad -(n-1)(\log h)''(r)\nu^2 \nonumber\\
    & = (n-1)\frac{k}{h(r)^2}(1-\nu^2) - n\frac{h''(r)}{h(r)} + (n-1)(\log h)''(r)(1-\nu^2)\nonumber \\
    & = (n-1)\left(\frac{k}{h(r)^2} + (\log h)''(r)\right)(1-\nu^2) - n\frac{h''(r)}{h(r)}.
\end{align}
On the other hand, let $\{e_1,\ldots,e_n\}$ be a local orthonormal frame on $\Sigma$ with respect to the metric induced from $M$. Since $\eta$ is the unit normal vector field along $\Sigma$, we set $e_{n+1}=\eta$. Then ${e_1,\ldots,e_n,e_{n+1}}$ forms a local orthonormal frame on $M$. Following the notation of Section~\ref{sec4}, we denote by $K_{ijkl}$ the components of the curvature tensor of $M$. In particular,
$$
K_{n+1ijk}=g\bigl(\eta,R(e_j,e_k)e_i\bigr),
$$
where $R$ denotes the Riemann curvature tensor of $M$, given by
$$
R(e_j,e_k)e_i
=
\overline{\nabla}_{e_j}\overline{\nabla}_{e_k}e_i
-
\overline{\nabla}_{e_k}\overline{\nabla}_{e_j}e_i
-
\overline{\nabla}_{[e_j,e_k]}e_i.
$$
Here $\overline{\nabla}$ is the Levi-Civita connection of $(M,g)$. Then
\begin{align}
    -\sum_{i,j}K_{n+1iij}\,g(X,e_j)
    & = \sum_{i,j} g\left(\eta,R(e_j,e_i)e_i\right) \,g(X,e_j) \nonumber \\
    & = \sum_j \Ricm(\eta,e_j) \, g(X,e_j) \nonumber\\
    & = \sum_j \bigg(\Ricn \left(\eta^N,e_j^N\right)  - (n-1)(\log h)''(r) \, g\left(\eta,\ddr\right) \, g\left(e_j,\ddr\right)\bigg) \, g(X,e_j) \nonumber\\
    & = \sum_j \bigg(\Ricn\left(\eta^N,e_j^N\right)g(X,e_j) \nonumber\\
    &  \quad - (n-1)(\log h)''(r) \, g\left(\eta,h(r)\ddr\right) \, g\left(e_j,\ddr\right) \, g\left(\frac{X}{h(r)},e_j\right) \nonumber\bigg)\\
    & = \sum_j \Ricn\left(\eta^N,e_j^N\right) \, g(X,e_j) - (n-1)(\log h)''(r) \, g(X,\eta) \sum_j g\left(e_j,\ddr\right)^2 \nonumber \\
    & = \sum_j \Ricn\left(\eta^N,e_j^N\right) \, g(X,e_j) - (n-1)(1-\nu^2)(\log h)''(r) \, g(X,\eta) \nonumber
\end{align}
On the other hand, we note that
$$\eta^N = \sum_{i=1}^{n} g_N\left(\eta^N,\epsilon_i\right)\epsilon_i, \quad e_j^N = \sum_{k=1}^{n} g_N\left(e_j^N,\epsilon_k\right)\epsilon_k.$$
Moreover,
\begin{align*}
    \sum_j g\left(e_j^N,\frac{\epsilon_i}{h(r)}\right) \, g(X,e_j) & = g\left(X,\sum_jg\left(e_j^N,\frac{\epsilon_i}{h(r)}\right)e_j\right) \nonumber \\
    & = g\left(X,\sum_j \, g\left(e_j,\frac{\epsilon_i}{h(r)}\right)e_j\right) \nonumber \\
    & = g\bigg(X,\frac{\epsilon_i}{h(r)}-g\left(\eta,\frac{\epsilon_i}{h(r)}\right)\eta\bigg) \nonumber \\
    & = g\bigg(h(r)\ddr,\frac{\epsilon_i}{h(r)}\bigg) - g\bigg(X, g\left(\eta,\frac{\epsilon_i}{h(r)}\right)\eta\bigg) \nonumber \\
    & = -g\left(\eta,\frac{\epsilon_i}{h(r)}\right)\,g(X,\eta).
\end{align*}
Using these relations, we have
\begin{align}\label{5.5}
    \sum_j \Ricn  \left(\eta^N,e_j^N\right)\,g(X,e_j)
    & = \sum_j(n-1) k g_N\left(\eta^N,e_j^N\right)\,g(X,e_j) \nonumber \\
    & = \sum_{i,j,k} (n-1)k g_N\left(\eta^N,\epsilon_i\right) g_N\left(e_j^N,\epsilon_k\right) g_N(\epsilon_i,\epsilon_k) \, g(X,e_j) \nonumber \\
    & = (n-1)k \sum_{i,j} g_N\left(\eta^N,\epsilon_i\right) g_N\left(e_j^N,\epsilon_i\right)\,g(X,e_j) \nonumber \\
    & = (n-1)\frac{k}{h(r)^2} \sum_{i} g\left(\eta^N,\frac{\epsilon_i}{h(r)}\right) \sum_{j} g\left(e_j^N,\frac{\epsilon_i}{h(r)}\right)\, g(X,e_j)  \nonumber \\
    & = -(n-1)\frac{k}{h(r)^2}\sum_{i} g\left(\eta,\frac{\epsilon_i}{h(r)}\right) \, g\left(\eta,\frac{\epsilon_i}{h(r)}\right)\,g(X,\eta) \nonumber \\
    & = -(n-1)\frac{k}{h(r)^2}(1-\nu^2)\,g(X,\eta).
\end{align}
Therefore
\begin{align}\label{Kiij}
    -\sum_{i,j}K_{n+1iij}\,g(X,e_j)
    & = \sum_j \Ricn\left(\eta^N,e_j^N\right) \, g(X,e_j) - (n-1)(1-\nu^2)(\log h)''(r) \, g(X,\eta) \nonumber\\
    & = -(n-1)(1-\nu^2)\left(\frac{k}{h(r)^2} \, g(X,\eta) + (\log h)''(r) \, g(X,\eta)\right).
\end{align}
Denote by $\nabla$ and $\Delta$ the gradient and the Laplacian operator on $\Sigma$, respectively. It is well known that
$$\lap \eta = -n\nabla H - |A|^2\eta - \sum_{i,j}K_{n+1iij}e_j.$$
Then it follows from (\ref{Kiij}) that
\begin{align}\label{lapg}
    \lap g(X,\eta) & = \sum_i(e_i\,e_i-\nabla_{e_i}e_i)\,g(X,\eta) \nonumber \\
    & = \sum_ie_i\bigg(g\left(\Grad_{e_i}X,\eta\right) + g\left(X,\Grad_{e_i}\eta\right)\bigg) - \sum_i\bigg( g\left(\Grad_{\nabla_{e_i}e_i}X,\eta\right) + g\left(X,\Grad_{\nabla_{e_i}e_i}\eta\right) \bigg) \nonumber \\
    & = \sum_ie_i\bigg(g\left(h'(r)e_i,\eta\right) + g\left(X,\Grad_{e_i}\eta\right)\bigg) - \sum_i\bigg(g\left(h'(r)\nabla_{e_i}e_i,\eta\right) + g\left(X,\Grad_{\nabla_{e_i}e_i}\eta\right)\bigg) \nonumber \\
    & = \sum_i g\left(\Grad_{e_i}X,\Grad_{e_i}\eta\right) + \sum_i g\left(X,\Grad_{e_i}\Grad_{e_i}\eta\right) - \sum_i g\left(X,\Grad_{\nabla_{e_i}e_i}\eta\right) \nonumber \\
    & = -n h'(r) H + g\bigg(X,\left(\Grad_{e_i}\Grad_{e_i}-\nabla_{e_i}e_i\right)\eta\bigg) \nonumber \\
    & = -n h'(r) H + g(X,\lap \eta) \nonumber \\
    & = -n h'(r) H + g\left(X,-n\nabla H - |A|^2\eta - \sum_{i,j}K_{n+1iij}e_j\right) \nonumber \\
    & = -n h'(r) H -n g\left(X,\nabla H\right) -g(X,\eta)|A|^2 -\sum_{i,j}K_{n+1iij}\,g(X,e_j) \nonumber \\
    & = -n h'(r) H -n g\left(X,\nabla H\right) -g(X,\eta)|A|^2  \nonumber \\
    & \quad -(n-1)(1-\nu^2)\left(\frac{k}{h(r)^2} \, g(X,\eta) + (\log h)''(r) \, g(X,\eta)\right),
\end{align}
where we used the fact that $e_i$ and $\nabla_{e_i}e_i$ are perpendicular to $\eta$ in the fourth equality. As a direct consequence of the above computations, we have the following result.

\begin{lem}\label{lem4.2}
Let $M=(0,\bar{r})\times_h N$ be an $(n+1)$-dimensional  warped product manifold with the metric $g=dr^2+h(r)^2g_N$, where $n\ge 2$ and $h(r)>0$ on $(0,\bar{r})$. Assume that $N$ is an $n$-dimensional Einstein manifold with $\mathrm{Ric}_N = (n-1)k g_N$ for some constant $k$. Let $X=h(r)\frac{\partial}{\partial r}$ be the conformal radial vector field. If $\Sigma$ is a two-sided minimal hypersurface immersed in $M$, then
 $$L\,g(X,\eta) = -n\frac{h''(r)}{h(r)}g(X,\eta),$$
where $L = \lap + |A|^2 + \mathrm{Ric}_M(\eta,\eta)$ denotes the Jacobi operator.
\end{lem}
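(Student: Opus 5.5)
The plan is to combine the two identities already derived, namely the formula \eqref{lapg} for $\Delta g(X,\eta)$ and the formula \eqref{ric} for $\Ric_M(\eta,\eta)$, and then use minimality. Since $\Sigma$ is minimal, we have $H\equiv 0$. Hence both $-nh'(r)H$ and $-n\,g(X,\nabla H)$ drop out of \eqref{lapg}. What remains is
$$
\Delta g(X,\eta) = -|A|^2 g(X,\eta) - (n-1)(1-\nu^2)\left(\frac{k}{h(r)^2}+(\log h)''(r)\right) g(X,\eta).
$$

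Next I will apply the Jacobi operator, $L g(X,\eta)=\Delta g(X,\eta)+|A|^2 g(X,\eta)+\Ric_M(\eta,\eta)\,g(X,\eta)$. The $|A|^2$ terms cancel immediately. For the Ricci term I substitute \eqref{ric}:
$$
\Ric_M(\eta,\eta)=(n-1)\left(\frac{k}{h(r)^2}+(\log h)''(r)\right)(1-\nu^2)-n\frac{h''(r)}{h(r)}.
$$
Its first part cancels exactly against the curvature term surviving in $\Delta g(X,\eta)$. This leaves $L g(X,\eta)=-n\frac{h''(r)}{h(r)}g(X,\eta)$, which is the claim.

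The only points needing care are the inputs behind \eqref{lapg}. The first is the formula $\Delta\eta=-n\nabla H-|A|^2\eta-\sum K_{n+1iij}e_j$. The second is Brendle's identity $\overline{\nabla}_T X=h'T$, which is what makes $\sum_i g(\overline{\nabla}_{e_i}X,\overline{\nabla}_{e_i}\eta)=-nh'H$. The third is the evaluation \eqref{Kiij} of the mixed Ricci term $\sum_j\Ric_M(\eta,e_j)g(X,e_j)$. In that evaluation, $\sum_j g(e_j,\partial_r)^2=1-\nu^2$, and the tangential projection of $\epsilon_i/h$ produces the factor $-g(\eta,\epsilon_i/h)g(X,\eta)$.

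These have all been carried out before the statement, so the main obstacle is bookkeeping: checking that the sign conventions for $K_{n+1iij}$ in \eqref{Kiij} match those in the formula for $\Delta\eta$, so that the $(1-\nu^2)$ terms cancel rather than double. I would verify this on a simple test case: a minimal sphere-slice or a totally geodesic hyperplane through the origin in $\mathbb{R}^{n+1}$, with $h=r$ and $k=1$. There $(\log h)''=-1/r^2$, so the curvature bracket $k/h^2+(\log h)''$ vanishes identically, consistent with $h''=0$ giving $L g(X,\eta)=0$. Here $g(X,\eta)=\langle x,\eta\rangle$ is the classical support-function Jacobi field.
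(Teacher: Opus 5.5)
Your proposal is correct and is exactly the paper's argument: set $H=0$ in \eqref{lapg}, add $|A|^2 g(X,\eta)$ and $\mathrm{Ric}_M(\eta,\eta)\,g(X,\eta)$ from \eqref{ric}. The $|A|^2$ terms and the $(n-1)(1-\nu^2)\bigl(k/h(r)^2+(\log h)''(r)\bigr)$ terms then cancel, leaving $-n\frac{h''(r)}{h(r)}g(X,\eta)$. One minor point in your sanity check: the slices $\{r=c\}$ are not minimal in $\mathbb{R}^{n+1}$, and a hyperplane through the origin gives the trivial case $g(X,\eta)\equiv 0$, so the check is better done on a general minimal hypersurface avoiding the origin, where $\langle x,\eta\rangle$ is the classical Jacobi field.
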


\begin{proof}
Since $\Sigma$ is minimal, it follows from \eqref{ric} and \eqref{lapg} that
    \begin{align*}
        L\,g(X,\eta) & = \left(\lap + |A|^2 + \Ricm(\eta,\eta)\right)\,g(X,\eta)\nonumber \\
        & = -nh'H - n \, g(X,\nabla H) - g(X,\eta)|A|^2 \nonumber \\
        & \quad - (n-1)(1-\nu^2)\left(\frac{k}{h(r)^2} \, g(X,\eta) + (\log h)''(r) \, g(X,\eta)\right) \nonumber \\
        & \quad + g(X,\eta)|A|^2 \nonumber \\
        & \quad + (n-1)\left(\frac{k}{h(r)^2} + (\log h)''(r)\right)(1-\nu^2)g(X,\eta) - n\frac{h''(r)}{h(r)}g(X,\eta) \nonumber\\
        & =-n\frac{h''(r)}{h(r)}g(X,\eta).
    \end{align*}
\end{proof}
We are now ready to prove the following theorem.

\begin{thm} \label{thmw}
Let $M=(0,\bar{r})\times_h N$ be an $(n+1)$-dimensional  warped product manifold with the metric $g=dr^2+h(r)^2g_N$, where $n\ge 2$ and $h(r)>0$ on $(0,\bar{r})$. Assume that $N$ is an $n$-dimensional Einstein manifold with $\mathrm{Ric}_N = (n-1)k g_N$ for some constant $k$. Let $\Sigma$ be a complete noncompact two-sided minimal hypersurface immersed in $M$ with $\nu\geq0$, where $\nu=g\!\left(\frac{\partial}{\partial r},\eta\right)$ is the angle function. If $h''(r)\geq0$ on $\Sigma$, then either $\nu>0$ everywhere on $\Sigma$ or $\nu\equiv 0$ on $\Sigma$. Moreover, if $\nu>0$ on $\Sigma$, then $\Sigma$ is stable.
\end{thm}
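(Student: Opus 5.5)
The plan is to work with the function $u:=g(X,\eta)=h(r)\nu$ on $\Sigma$, which is smooth and, by the normalization $\nu\ge 0$ together with $h>0$, nonnegative. Lemma~\ref{lem4.2} gives
$$
Lu=-n\frac{h''(r)}{h(r)}\,u .
$$
Since $h''\ge 0$ on $\Sigma$ and $u\ge 0$, we obtain $Lu\le 0$, i.e.
$$
\Delta u+\bigl(|A|^2+\Ricm(\eta,\eta)\bigr)u\le 0 .
$$

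For the dichotomy I will rewrite this as
$$
\Delta u-c(x)u\le 0,\qquad c:=\bigl(|A|^2+\Ricm(\eta,\eta)\bigr)^-\ge 0,
$$
where $t^-=\max\{-t,0\}$. This is legitimate: the positive part of $|A|^2+\Ricm(\eta,\eta)$ multiplies $u\ge 0$, so dropping it only helps. The coefficient $c$ is locally bounded, being continuous. The strong maximum principle (Hopf) then applies to the nonnegative supersolution $u$. If $u$ vanishes at an interior point, it attains its minimum value $0$ there, so $u\equiv 0$ on the connected component containing that point. Since $\Sigma$ is connected, this means $u\equiv0$ on $\Sigma$. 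Because $h>0$, we conclude that either $\nu>0$ everywhere or $\nu\equiv 0$.

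Alternatively, $u$ satisfies $\Delta u+qu\le 0$ with a smooth $q$, since $h''\ge0$, $u\ge 0$, and the terms combine into $\Delta u+\tilde q u=0$ with $\tilde q$ smooth. The standard strong maximum principle for nonnegative solutions of linear equations with bounded zeroth-order coefficient then gives the same conclusion. I expect the main (minor) obstacle to be precisely this step: making sure the maximum principle is applied in a version valid without a sign condition on the zeroth-order term. I will use the version for nonnegative supersolutions, which needs no sign condition and amounts to the splitting of the coefficient described above.

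For stability, assume $\nu>0$. Then $f:=u=h\nu$ is a positive smooth function on $\Sigma$ with $Lf\le 0$. Since $\Sigma$ is complete, noncompact, and two-sided, Corollary~\ref{fc&s} (equivalently, the implication (ii)$\Rightarrow$(i) of Proposition~\ref{prop0} with $q=-|A|^2-\Ricm(\eta,\eta)$) yields $\lambda_1^{L_q}(D)\ge 0$ for every bounded domain $D$. This is exactly the stability inequality for all compactly supported $\varphi$, so $\Sigma$ is stable. I will also check that Lemma~\ref{lem4.2} indeed needs only minimality and two-sidedness, which hold here, so the whole argument reduces to the lemma together with these two standard tools.
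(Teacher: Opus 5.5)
Your proposal is correct and follows the paper's proof. You use the test function $h\nu=g(X,\eta)$, Lemma~\ref{lem4.2} to get $L(h\nu)\le 0$, the strong maximum principle for this nonnegative supersolution to obtain the dichotomy, and Corollary~\ref{fc&s} for stability. The only difference is cosmetic: you absorb the sign of $|A|^2+\Ricm(\eta,\eta)$ globally through its negative part and spread the zero set by an open--closed connectedness argument, whereas the paper uses the local constant $m=\min\{0,\inf_{V(p_0)}(|A|^2+\Ricm(\eta,\eta))\}$ near the zero and then invokes unique continuation.
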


\begin{proof}
 Let $X=h(r)\frac{\partial}{\partial r}$ and define the nonnegative function $f$ on $\Sigma$  by
$$
f=g(X,\eta)=h(r)\nu.
$$
By Lemma~\ref{lem4.2}, we have
$$
Lf=L(g(X,\eta))=-n\frac{h''(r)}{h(r)}g(X,\eta)=-nh''(r)\nu\le 0.
$$
Suppose that there exists a point $p_0\in\Sigma$ such that $f(p_0)=0$. Let $V(p_0)$ be a relatively compact open neighborhood of $p_0$ in $\Sigma$. Define the elliptic operator $L'=\lap+m$, where
$$
m=\min\left\{0,\inf_{V(p_0)}\bigl(|A|^2+\Ricm(\eta,\eta)\bigr)\right\}.
$$
Then we obtain
\begin{align*}
L'(-f)
&= -\lap f-mf \\
&\ge -\lap f-\bigl(|A|^2+\Ricm(\eta,\eta)\bigr)f \\
&= -Lf \\
&\ge 0.
\end{align*}
on $V(p_0)$. 
By the maximum principle (see \cite{G&T01} for example), it follows that $f$ vanishes on $V(p_0)$. Hence, by the unique continuation property, we conclude that $f \equiv 0$ on $\Sigma$, that is, $\nu \equiv 0$ on $\Sigma$. If $\nu>0$, then we have
$$
g(X,\eta)=h(r)\,g\left(\frac{\partial}{\partial r},\eta\right)=h(r)\,\nu>0.
$$
Therefore, the conclusion follows from Corollary~\ref{fc&s} and Lemma~\ref{lem4.2}.
\end{proof}

As an immediate consequence of Theorem~\ref{thmw}, we obtain the following corollary for Euclidean space and hyperbolic space.

\begin{cor}\label{wceh}
Let $\Sigma$ be a complete noncompact two-sided minimal hypersurface immersed in $\mathbb{R}^{n+1}$ or $\mathbb{H}^{n+1}$ for $n\ge2$. Fix a point $p\notin\Sigma$, let $r$ denote the distance from $p$, and let $\partial/\partial r$ be the corresponding radial unit vector field on the punctured ambient space. Assume that
$$
\nu_p=g\!\left(\frac{\partial}{\partial r},\eta\right)\ge0
$$
on $\Sigma$. 
Then either $\nu_p>0$ everywhere on $\Sigma$ or $\nu_p\equiv0$ on $\Sigma$. 
Moreover, if $\nu_p>0$ on $\Sigma$, then $\Sigma$ is stable.
\end{cor}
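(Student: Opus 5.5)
The plan is to reduce Corollary~\ref{wceh} directly to Theorem~\ref{thmw}. We represent the punctured ambient space as a warped product over the round sphere, with the pole at the fixed point $p$, and then check the hypotheses of that theorem one by one.

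First, since $p\notin\Sigma$, the immersed hypersurface $\Sigma$ lies entirely in the punctured space. We use the standard representations
$$
\mathbb{R}^{n+1}\setminus\{p\}=(0,\infty)\times_r\mathbb{S}^n,
\qquad
\mathbb{H}^{n+1}\setminus\{p\}=(0,\infty)\times_{\sinh r}\mathbb{S}^n,
$$
where $r$ is the distance from $p$. In both cases $\partial/\partial r$ is exactly the radial unit vector field appearing in the statement, so the angle function $\nu_p$ coincides with $\nu$ of Theorem~\ref{thmw}. The hypersurface $\Sigma$, viewed in this warped product, is still complete, noncompact, two-sided and minimal. Completeness is a property of the induced metric, and the local isometry with the punctured ambient space does not change it. Removing $p$ does not affect the immersion at all.

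Next we verify the curvature hypotheses of Theorem~\ref{thmw}. The fiber $N=\mathbb{S}^n$ with its round metric is Einstein with $\mathrm{Ric}_N=(n-1)g_N$, so $k=1$. The warping functions are $h(r)=r$ and $h(r)=\sinh r$, and both are positive on $(0,\infty)$. Their second derivatives are $h''=0$ and $h''=\sinh r>0$ respectively, so $h''\ge0$ everywhere on $(0,\infty)$, and in particular on $\Sigma$.

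With these checks in place, Theorem~\ref{thmw} applies directly. It gives the dichotomy that either $\nu_p>0$ everywhere or $\nu_p\equiv0$, and it gives stability when $\nu_p>0$.

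The only point needing some care is the identification step: the theorem's $\nu$ is defined with respect to the warped-product coordinate $\partial/\partial r$, and we must confirm that this agrees with the gradient of the distance function from $p$. This holds because geodesic polar coordinates centered at $p$ are exactly the warped-product coordinates above. Beyond this, no genuine obstacle is expected.
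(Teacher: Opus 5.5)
Your proposal is correct and follows the paper's proof: you write the punctured space as $(0,\infty)\times_h\mathbb{S}^n$ with $h(r)=r$ or $h(r)=\sinh r$, note that $h''\ge 0$, and apply Theorem~\ref{thmw}. Your explicit checks that the round $\mathbb{S}^n$ is Einstein with $k=1$, and that $\partial/\partial r$ coincides with the radial field from $p$, are details the paper leaves implicit.
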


\begin{proof}
Because $p\notin\Sigma$, the hypersurface is contained in the punctured ambient space. With respect to the fixed pole $p$, we have
$$
\mathbb{R}^{n+1}\setminus\{p\}=(0,\infty)\times_r\mathbb{S}^n,
\qquad
\mathbb{H}^{n+1}\setminus\{p\}=(0,\infty)\times_{\sinh r}\mathbb{S}^n.
$$
Thus the warping function is respectively $h(r)=r$ or $h(r)=\sinh r$, and in either case $h''(r)\ge0$. The desired conclusion  follows from Theorem~\ref{thmw}.
\end{proof}

\vskip 0.3cm
\noindent
{\bf Acknowledgment: } The authors would like to thank the referee for the careful reading of the manuscript and for the valuable comments and suggestions that helped improve the paper. The second author was supported by the National Research Foundation of Korea (NRF-2021R1A2C1003365).


\vskip 1cm
\noindent Jooyeon Park\\
Department of Mathematics\\
Sookmyung Women's University \\
Cheongpa-ro 47-gil 100, Yongsan-ku, Seoul, 04310, Korea \\
{\tt e-mail: yeonpark@sookmyung.ac.kr}\\

\bigskip
\noindent Keomkyo Seo\\
Department of Mathematics and Research Institute of Natural Sciences\\
Sookmyung Women's University\\
Cheongpa-ro 47-gil 100, Yongsan-ku, Seoul, 04310, Korea \\
{\tt E-mail: kseo@sookmyung.ac.kr}\\
URL: http://sites.google.com/site/keomkyo/


\begin{thebibliography}{123}

\bibitem{A&R16} J. A. Aledo, R. M. Rubio, \textit{Stable minimal surfaces in Riemannian warped products}, J. Geom. Anal. {\bf 27} (1) (2017), 65-78.

\bibitem{A66} F. J. Almgren, Jr., \textit{Some interior regularity theorems for minimal surfaces and an extension of Bernstein’s theorem}, Ann. of Math. {\bf 84} (2) (1966) 277–292.

\bibitem{B27} S. Bernstein, \textit{Über ein geometrisches Theorem und seine Anwendung auf die partiellen Differentialgleichungen vom elliptischen Typus}, Math. Z. {\bf 26} (1927), 551–558.

\bibitem{brendle13} S. Brendle, \textit{Constant mean curvature surfaces in warped product manifolds}, Publ. math. IHES {\bf 117} (2013), 247-269.

\bibitem{BGG69} E. Bombieri, E. De Giorgi, E. Giusti, \textit{Minimal cones and the Bernstein problem}, Invent. Math. {\bf 7} (1969), 243-268.


\bibitem{Bourguignon81} J.-P. Bourguignon, \textit{Codazzi tensor fields and curvature operators}, Global Differential Geometry and Global Analysis, Lecture Notes in Math. {\bf 838}, Springer, Berlin, 1981, 249-250.

\bibitem{CMR24} G. Catino, P. Mastrolia, A. Roncoroni, \textit{Two rigidity results for stable minimal hypersurfaces}, Geom. Funct. Anal. {\bf 34} (2024), 1-18.

\bibitem{C&L01} L. F. Cheung, P. F. Leung, \textit{Eigenvalue estimates for submanifolds with bounded mean curvature in the hyperbolic space}, Math. Z. {\bf 236} (2001), 525-530.

\bibitem{CL24} O. Chodosh, C. Li, \textit{Stable minimal hypersurfaces in $\mathbb{R}^4$}, Acta Math. {\bf 233} (2024), 1-31.

\bibitem{CLMS} O. Chodosh, C. Li, P. Minter, D. Stryker, \textit{Stable minimal hypersurfaces in $\mathbb{R}^5$}, to appear in Annals of Mathematics, arXiv:2401.01492.

\bibitem{G65} E. De Giorgi, \textit{Una estensione del teorema di Bernstein}, Ann. Scuola Norm. Sup. Pisa Cl. Sci. {\bf 19} (3) (1965), 79–85.

\bibitem{CD83} M. Do Carmo, M. Dajczer, \textit{Rotation hypersurfaces in spaces of constant curvature}, Trans. Amer. Math. Soc. {\bf 277} (1983), 685-709.

\bibitem{CP79} M. Do Carmo, C. K. Peng, \textit{Stable complete minimal surfaces in $\mathbb{R}^3$ are planes}, Bull. Amer. Math. Soc. {\bf 1} (1979), 903–906.

\bibitem{CP80} M. Do Carmo, C. K. Peng, \textit{Stable complete minimal hypersurfaces}, Proc. Beijing Sympos. Differential Equations Differential Geom. {\bf 3} (1980), 1349-1358.

\bibitem{D&S12} N. T. Dung, K. Seo, \textit{Stable minimal hypersurfaces in a Riemannian manifold with pinched negative sectional curvature}, Ann. Global Anal. Geom. {\bf 41} (2012), 447-460.

\bibitem{FC&S80} D. Fischer-Colbrie, R. Schoen, \textit{The structure of complete stable minimal surfaces in 3-manifolds of non-negative scalar curvature}, Comm. Pure Appl. Math. {\bf 33} (1980), 199-211.

\bibitem{F62} W. H. Fleming, \textit{On the oriented Plateau problem}, Rend. Circ. Mat. Palermo {\bf 11} (1962), 69–90.

\bibitem{G&T01} D. Gilbarg, N. S. Trudinger, \textit{Elliptic partial differential equations of second order}, Classics in Mathematics, Springer (2001). Reprint of the 1998 edition.

\bibitem{KN93} S. Kato, S. Nayatani, \textit{Complete conformal metrics with prescribed scalar curvature on subdomains of a compact manifold}, Nagoya Math. J. {\bf 132} (1993), 155-173.

\bibitem{KW} J. L. Kazdan, F. W. Warner, \textit{Scalar curvature and conformal deformation of Riemannian structure}, J. Differential Geometry {\bf 10} (1975), 113–134.

\bibitem{M} L. Mazet, \textit{Stable minimal hypersurfaces in $\mathbb{R}^6$}, preprint, arXiv:2405.14676.

\bibitem{M81}  H. Mori, \textit{Minimal surfaces of revolution in $\mathbb{H}^3$ and their global stability}, Indiana Univ. Math. J. {\bf 30} (1981), 787-794.

\bibitem{srg} B. O’Neill, \textit{Semi-riemannian geometry with application to relativity}, Pure Appl. Math. {\bf 103}, Academic Press (1983).

\bibitem{P16} P. Petersen, \textit{Riemannian geometry}, 3rd ed. Graduate Texts in Mathematics {\bf 171}, Springer (2016).

\bibitem{Schoen&75} R. Schoen, L. Simon, S-T. Yau, \textit{Curvature estimates for minimal hypersurfaces}, Acta Math. {\bf 134} (3-4) (1975), 275-288.

\bibitem{Seo11} K. Seo, \textit{Stable minimal hypersurfaces in the hyperbolic space}, J. Korean Math. Soc. {\bf 48} (2) (2011), 253-266.

\bibitem{Seo12} K. Seo, \textit{Eigenvalue estimates for stable minimal hypersurfaces}, Math. Inequal. Appl. {\bf 15} (1) (2012), 69-75.

\bibitem{SZ98} Y. B. Shen, X. H. Zhu, \textit{On stable complete minimal hypersurfaces in $\mathbb{R}^{n+1}$}, Amer. J. Math. {\bf 120} (1998), 103-116.

\bibitem{S68} J. Simons, \textit{Minimal varieties in riemannian manifolds}, Ann. of Math. {\bf 88} (2) (1968), 62–105.


\bibitem{Xin05} Y. L. Xin, \textit{Berstein type theorems without graphic condition}, Asian J. Math. {\bf 9} (1) (2005), 31-44.

\end{thebibliography}
\end{document}